\documentclass[11pt,english]{article}

\usepackage[spanish, english]{babel}
\usepackage{amsmath,amssymb,amsthm, mathrsfs, color, mathtools, pgf,tikz, titlesec, graphicx}
\usepackage{enumitem}
\usetikzlibrary{arrows}
\usepackage{relsize}

\topmargin=0cm \textheight=20cm \textwidth=14.0cm
\evensidemargin=.8cm \oddsidemargin=.8cm

 \usepackage[colorlinks=true]{hyperref}

\hypersetup{urlcolor=blue, citecolor=red}

  \textheight=8.2 true in
   \textwidth=6.0 true in
    \topmargin 15pt
     \setcounter{page}{1}

\newtheorem{Teorema}{Teorema}[section]

\newtheorem{Proposition}[Teorema]{Proposition}
\newtheorem{Lemma}[Teorema]{Lemma}
\newtheorem{Theorem}[Teorema]{Theorem}

\newtheorem{Corollary}[Teorema]{Corollary}

\newtheorem{Definition}[Teorema]{Definition}
\newtheorem{Remark}[Teorema]{Remark}






\begin{document}


 
\selectlanguage{english}

\begin{center}

{\LARGE \textbf{Horocyclic and geodesic orbits on geometrically infinite surfaces with variable negative curvature}}\\
  \vspace{.50in}  Victoria García

\end{center}

\begin{abstract}
Here we study the behaviour of the horocyclic orbit of a vector on the unit tangent bundle of a geometrically infinite surface with variable negative curvature, when the corresponding geodesic ray is almost minimizing and the injectivity radius is finite. 
\end{abstract}

\section{Introduction}

Let $M$ be an orientable geometrically infinite surface with a complete Riemanninan metric of negative curvature, and let $\tilde{M}$ be its universal cover. Let us suppose that $\Gamma$ is the fundamental group of $M$. We can see it as a subgroup of the group of orientation preserving isometries of $\tilde{M}$, that is $\Gamma<Isom^+(\tilde{M})$. We can write then $$M= \Gamma\textit{\textbackslash}\tilde{M}.$$ If $T^1M$ and $T^1\tilde{M}$ are the unit tangent bundles of $M$ and $\tilde{M}$ respectively, then we can also write: $$T^1M=\Gamma \textit{\textbackslash}T^1\tilde{M}.$$

If the curvature of $\tilde{M}$ has an upper bound $-\kappa ^2$, with $\kappa >0$, then the geodesic flow on $T^1\tilde{M}$, denoted by $g_\mathbb{R}$, happens to be an Anosov flow (see appendix of \cite{WB}), and this flow descends to $T^1M$. The strong stable manifold for the geodesic flow defines a foliation, 
(as we shall see in section \ref{prel}), which is the stable horocycle flow, denoted by $h_\mathbb{R}$, and it also descends to $T^1M$. 

A work of Hedlund shows that if the surface $M$ has constant negative curvature and it is compact, then the horocycle flow is minimal on the unitary tangent bundle. This means that the only closed non empty invariant set for the horocycle flow is $T^1M$ (\cite{GH}). Later, F. Dal'Bo generalizes this result to compact surfaces of variable negative curvature (\cite{FD2}). She also proves that in case the fundamental group is finitely generated, then all the horocycles on the non-wandering set are either dense or closed, motivating the interest for studying geometrically infinite surfaces.

S. Matsumoto studied a family of geometrically infinite surfaces of constant curvature, for which he proves that the hororycle flow on the unit tangent bundle does not admit minimal sets (\cite{SM}). Also Alcalde, Dal'Bo, Martinez and Verjovsky (\cite{ADMV}) studied this family of surfaces which appear as leaves of foliations, and proved the same result in this context. More recently, A. Bellis studied the links between geodesic and horcycle orbits for some geometrically infinite surface  of constant negative curvature (\cite{AB}). In particular, his result implies Matsumoto's result. 

Our aim was to determine whether or not the results of these last works are still valid if we don't have the hypothesis of constant curvature, and to provide arguments that do no depend on specific computations which are valid on constant negative curvature. 

Let us introduce the following definitions.

\begin{Definition}\em \label{injra}
Let $p$ be a point on a surface $M$. The \emph{injectivity radius} of $p$ is defined as $$ \mathrm{Inj}(p):=\min_{Id\neq \gamma\in \Gamma} d(\tilde{p},\gamma(\tilde{p})),$$ where $\tilde{p}$ is any lift of $p$ to the universal cover $\tilde{M}$ of $M$. 
\end{Definition}

\begin{Definition}\em
If $v\in T^1M$, the \emph{geodesic ray} $v[0,\infty)$ is the projection of the future geodesic orbit $\{g_t(v):t\in [0,\infty)\}$ of $v$ on $M$. Also $v(t)$ will be the projection on $M$ of $g_t(v)\in T^1M$. 
\end{Definition}

\begin{Definition}\em
A geodesic ray $v[0,\infty)$ on $M$ is said to be \emph{almost minimizing} if  there is a positive real number $c$ such that $$d(v(t),v(0))\geq t-c\ \forall t\geq 0.$$ 
\end{Definition}

\begin{Definition}\em \label{injrad}
Let $v[0, \infty)$ be a geodesic ray, then we define its injectivity radius as $$\underline{\mathrm{Inj}}(v[0, \infty)):= \liminf_{t\longrightarrow \infty} \mathrm{Inj}(v(t)).$$
\end{Definition}

We prove the following theorem: 

\begin{Theorem}
\label{teorema}
Let M be an orientable geometrically infinite surface with a complete Riemanninan metric of negative curvature. Let $v\in T^1M$ such that $v[0,\infty)$ is an almost minimizing geodesic ray with finite injectivity radius $a$, and such that $h_\mathbb{R}(v)$ is not closed. Then there is a sequence of times $\tau_n$ going to $\infty$ such that $g_{\tau_n}(v)\in \overline{h_\mathbb{R}(v)}$ for all $n$. And even more, the set  $\mathcal{I}=\{t\in \mathbb{R}:g_t(v)\notin \overline{h_\mathbb{R}(v)}\}$ only contains intervals of bounded length. 
\end{Theorem}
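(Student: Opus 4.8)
The plan is to work in the universal cover $\tilde M$ and to translate both the conclusion and the hypotheses into the boundary-and-Busemann language attached to the forward endpoint of the ray. Lift $v$ to $\tilde v\in T^1\tilde M$, let $\xi:=\tilde v^+\in\partial\tilde M$ be its forward endpoint, $\eta:=\tilde v^-$ its backward one, $\tilde p=\pi\tilde v$, and let $b_\xi$ denote the Busemann function centred at $\xi$. As recalled in Section~\ref{prel}, the strong stable leaf through $\tilde v$ is $\{w\in T^1\tilde M: w^+=\xi,\ b_\xi(\pi w)=b_\xi(\tilde p)\}$, and $h_\mathbb{R}(\tilde v)$ sweeps it out while $w^-$ ranges over $\partial\tilde M\setminus\{\xi\}$. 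Passing to $T^1M=\Gamma\backslash T^1\tilde M$, the condition $g_\tau(v)\in\overline{h_\mathbb{R}(v)}$ means there are $\gamma_k\in\Gamma$ and $s_k\in\mathbb{R}$ with $\gamma_k h_{s_k}(\tilde v)\to g_\tau(\tilde v)$; comparing forward endpoints, backward endpoints and Busemann levels, this holds exactly when one can pick $\gamma_k$ with $\gamma_k\xi\to\xi$, $\gamma_k^{-1}\eta\ne\xi$, and horospherical shift $c_\xi(\gamma_k):=b_\xi(\gamma_k^{-1}\tilde p)-b_\xi(\tilde p)\to\tau$. Thus the whole theorem reduces to understanding the set $\mathcal S$ of realisable shifts $\lim_k c_\xi(\gamma_k)$ over families with $\gamma_k\xi\to\xi$: I must show $\mathcal S$ is unbounded above and meets every subinterval of $[T_0,\infty)$ of length a little larger than $a$.

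Next I would produce the elements that generate $\mathcal S$. Since $\underline{\mathrm{Inj}}(v[0,\infty))=a<\infty$, choose $t_n\to\infty$ with $\mathrm{Inj}(v(t_n))\to a$; for each $n$ this yields $\gamma_n\in\Gamma\setminus\{\mathrm{Id}\}$ with $d(\tilde v(t_n),\gamma_n\tilde v(t_n))\to a$, i.e. short isometries bringing the geodesic back near a translate of itself. The almost-minimizing hypothesis then pins down their geometry: it gives $d(v(t),v(t'))\ge (t-t')-c$ for all $t\ge t'\ge0$, so the ray is a uniform quasi-geodesic whose image in $M$ never doubles back, and in negative curvature this forces the recurrence realised by $\gamma_n$ to be \emph{longitudinal} rather than a short transverse loop: $\gamma_n$ fellow-travels the geodesic over a long window, so $\gamma_n\xi\to\xi$ and $\gamma_n$ pushes the horospheres outward, i.e. $c_\xi(\gamma_n)>0$. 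Measuring this shift near $\tilde v(t_n)$, where $\gamma_n$ displaces points by only $\approx a$, bounds it by $a+o(1)$. The non-closedness of $h_\mathbb{R}(v)$ enters precisely to guarantee these shifts are genuinely nonzero: a vanishing family would make the horospherical orbit periodic, hence $h_\mathbb{R}(v)$ closed. I therefore obtain a family of elements attracting towards $\xi$ with positive horospherical shift lying in a fixed interval $(0,a+\varepsilon]$.

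Finally I would feed these into the reduction. The property ``attracts towards $\xi$ with positive shift'' is stable under composition, so the semigroup generated by the $\gamma_n$ stays inside the admissible family $\{\gamma:\gamma\xi\to\xi\}$ and realises shifts equal to finite sums of numbers in $(0,a+\varepsilon]$. Such sums form an $(a+\varepsilon)$-net of a half-line $[T_0,\infty)$ and are unbounded, so $\mathcal S$ contains that net. By the reduction this yields times $\tau_n\to\infty$ with $g_{\tau_n}(v)\in\overline{h_\mathbb{R}(v)}$, and shows every gap of $\mathcal I=\{t:g_t(v)\notin\overline{h_\mathbb{R}(v)}\}$ has length at most $a+\varepsilon$; letting $\varepsilon\to0$ the gaps are bounded, which is the second assertion.

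I expect the genuine obstacle to be the middle step: establishing, without the explicit commutation relations available only in constant curvature, that the short recurrence elements $\gamma_n$ attract towards $\xi$ with a controlled positive horospherical shift, and that this attracting property persists under composition. This is exactly where computation must be replaced by comparison geometry—uniform divergence of geodesics, thinness of triangles, and Lipschitz and convexity estimates for Busemann functions in $\mathrm{CAT}(-\kappa^2)$ spaces—and where almost minimizing does the essential work of excluding transverse returns.
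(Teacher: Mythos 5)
Your opening reduction (describing $g_\tau(v)\in\overline{h_\mathbb{R}(v)}$ in Hopf-type coordinates via elements $\gamma_k$ with $\gamma_k\xi\to\xi$ and Busemann shift tending to $\tau$) is essentially the paper's Proposition \ref{Prop} rewritten at the boundary, and is fine modulo routine continuity of Busemann cocycles. The genuine gap is exactly where you suspected it, but it is worse than a missing estimate: your middle step asserts a false dichotomy. The short recurrence elements $\gamma_n$ produced by $\mathrm{Inj}(v(t_n))\to a$ need \emph{not} be longitudinal, and almost minimizing does not force them to be --- if anything it works against longitudinal returns, since a near-return of the ray to itself in nearly the same direction is precisely the recurrence that efficient escape to infinity tends to exclude. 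In the motivating examples (the tight surfaces of Section 5, cf.\ Proposition \ref{injtight}), an almost minimizing ray crosses a sequence of uniformly short closed geodesics \emph{transversally}, and $\gamma_n$ is the hyperbolic element whose axis is the crossed geodesic: then $\gamma_n\xi$ is nowhere near $\xi$, and there is no a priori sign or size control on $c_\xi(\gamma_n)$. So your admissible family with shifts in $(0,a+\varepsilon]$ never gets off the ground. The paper handles exactly these transverse elements by a different mechanism: Lemma \ref{lema1} performs a surgery (concatenate $v[0,t_n]$, the short loop $\alpha_n$, and $v[t_n,T]$, straighten to a geodesic, let $T\to\infty$) producing rays $v_n\to v$ in $T^1M$ whose lifts run from $\gamma_n\tilde v(0)$ to $\xi$, with no hypothesis that $\gamma_n$ nearly fixes $\xi$; the upper bound on $B_\xi(\gamma_n\tilde v(0),\tilde v(0))$ comes from a length comparison, and the crucial uniform \emph{lower} bound comes from Propositions \ref{prop1} and \ref{prop2} (an equidistant curve of a hyperbolic axis, or a horocycle at a parabolic fixed point, meets each horocycle based at $\xi$ in at most two points, giving a monotone modulus $\delta$), after replacing $\gamma_n$ by a power $\gamma_n^{k_n}$ so that the displacement lands in a fixed window $[C,2C]$.

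Two further steps of your plan would also fail or are superfluous. First, ``a vanishing family would make the horocyclic orbit closed'' is not a valid implication: shifts $c_\xi(\gamma_n)\to 0$ yield only the trivial conclusion $v\in\overline{h_\mathbb{R}(v)}$, and a non-closed orbit can perfectly well accumulate on itself at shift zero; what the theorem needs is a \emph{uniform} positive lower bound $b>0$, which your proposal never produces and which is precisely what the $\gamma_n^{k_n}$ trick together with $\delta$ supplies (non-closedness is used in the paper more narrowly, to rule out $\xi$ being the fixed point of the parabolic $\gamma_n$ in Case 2). Second, your semigroup-summation step is both delicate (the Busemann cocycle is only approximately additive for elements nearly fixing $\xi$, with compounding errors you would have to control uniformly) and unnecessary: once a single $r_0\in[b,B]$ with $g_{r_0}(v)\in\overline{h_\mathbb{R}(v)}$ is found, the invariance $g_{r_0}(\overline{h_\mathbb{R}(v)})=\overline{h_\mathbb{R}(g_{r_0}(v))}\subset\overline{h_\mathbb{R}(v)}$ immediately gives $g_{nr_0}(v)\in\overline{h_\mathbb{R}(v)}$ for all $n$, which yields both the sequence $\tau_n\to\infty$ and the bounded gaps in $\mathcal{I}$ in one stroke --- this is how the paper concludes.
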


This is the result that was proved by Bellis (\cite{AB}) for surfaces of constant negative curvature. Our proof takes some ideas of Bellis's proof, but introduces a different approach in some parts.

The following result generalizes the one proved by Matsumoto in the context of constant negative curvature. 

\begin{Definition}\em \label{tight}
Let $M$ be a noncompact Riemannian surface with variable negative curvature, and let $\Gamma$ be its fundamental group. We say that $M$ is \textit{tight} if $\Gamma$ is purely hyperbolic, and $M$ can be written as $$M:=M_1\cup M_2\cup...$$ where $M_n\subset M_{n+1}\ \forall \ n$, and each $M_n$  is a compact, not necessarily connected submanifold of $M$ with boundary $\partial M_n$, and the boundary components are closed geodesics whose lengths are bounded by some uniform constant $A\in \mathbb{R}$. 
\end{Definition}

\begin{Corollary}\label{theorema2}
If $M$ is a tight surface, there are no minimal sets for the horocycle flow on $T^1M$. 
\end{Corollary}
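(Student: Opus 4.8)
The plan is to argue by contradiction. Suppose $K\subseteq T^1M$ is a minimal set for the horocycle flow $h_\mathbb{R}$, fix a vector $v\in K$, and observe that minimality forces $\overline{h_\mathbb{R}(v)}=K$. The strategy is to check that $v$ (or a suitable replacement in $K$) satisfies the three hypotheses of Theorem \ref{teorema} and then to use its conclusion against minimality. The non-closedness of $h_\mathbb{R}(v)$ should come from tightness: since $\Gamma$ is purely hyperbolic it has no parabolic elements, so there are no periodic horocyclic orbits; the only remaining way for $h_\mathbb{R}(v)$ to be closed is as a properly embedded line with $K=h_\mathbb{R}(v)$, and I would exclude this using the separating neck decomposition $M_1\subset M_2\subset\cdots$. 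Finiteness of the injectivity radius of the ray is the easy hypothesis: a ray escaping to an end must cross infinitely many of the boundary geodesics $\partial M_n$, and at each such crossing time $t_k$ the point $v(t_k)$ lies on a closed geodesic of length at most $A$, so $\mathrm{Inj}(v(t_k))\le A$ and hence $\underline{\mathrm{Inj}}(v[0,\infty))\le A<\infty$.

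The first genuine obstacle is the almost-minimizing hypothesis, since an escaping ray need not be almost minimizing in general (a ray may cross a neck, backtrack, and recross it, producing shortcuts so that $d(v(t),v(0))$ is far below $t$). Here I would prove a geometric lemma adapted to tight surfaces, showing that a ray which escapes to an end while crossing the separating short geodesics essentially monotonically is almost minimizing, because a bounded-length separating geodesic cannot be shortcut at bounded cost by a $\Gamma$-translate; and I would then argue that a minimal set $K$ must contain such a vector $v$, exploiting recurrence of $h_\mathbb{R}$ on $K$ together with the fact that almost-minimizing directions correspond to conical behaviour of the forward endpoint. Securing a single $v\in K$ with all three properties is the crux of this part of the argument.

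With the hypotheses in hand, Theorem \ref{teorema} yields times $\tau_n\to\infty$ with $g_{\tau_n}(v)\in\overline{h_\mathbb{R}(v)}=K$, and moreover the exit set $\mathcal{I}$ contains only intervals of bounded length, so there is a uniform $R$ such that every interval $[t,t+R]$ contains a time $t'$ with $g_{t'}(v)\in K$. Because the ray is almost minimizing, $d(v(\tau_n),v(0))\ge\tau_n-c\to\infty$, so by Hopf–Rinow the points $v(\tau_n)$, and hence the vectors $g_{\tau_n}(v)\in K$, leave every compact subset of $T^1M$. If $K$ were bounded it would be compact, and this escape would already be a contradiction; thus the argument reduces to the case of an unbounded minimal set $K$ accumulating on the end through the receding necks.

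The second, and main, obstacle is to contradict minimality in this unbounded case. Writing $w_n:=g_{\tau_n}(v)\in K$, minimality forces $\overline{h_\mathbb{R}(w_n)}=K$, and in particular the fixed vector $v$ must lie in $\overline{h_\mathbb{R}(w_n)}$ for every $n$. I would rule this out for large $n$: the $w_n$ sit arbitrarily deep in the end, beyond the neck $\partial M_{k_n}$ with $k_n\to\infty$, so the stable horocyclic orbit of $w_n$ is based at the common endpoint $v^{+}$ but lies on horocycles penetrating deep toward that endpoint. Using the finiteness of the injectivity radius $a$ along the ray to control how these deep horocyclic orbits can return across the short necks, I would show that $\overline{h_\mathbb{R}(w_n)}$ cannot accumulate onto $v$, which remains in a fixed compact region of $T^1M$. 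This forces $\overline{h_\mathbb{R}(w_n)}\subsetneq K$ for large $n$, contradicting minimality and proving that no minimal set exists. Making the localization in this last step precise is where I expect the essential difficulty to lie.
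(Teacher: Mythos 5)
Your skeleton (argue by contradiction, verify the hypotheses of Theorem \ref{teorema}, get $g_{\tau_n}(v)\in \overline{h_\mathbb{R}(v)}$, and play this against minimality) matches the paper, and your injectivity-radius step is exactly the paper's Proposition \ref{injtight}. But the two pivotal steps are replaced by unproven claims, and the idea that actually drives the paper's proof is absent: the horocyclic/nonhorocyclic dichotomy combined with tightness implying the group is of the first kind. Since $L(\Gamma)=\partial_\infty\tilde{M}$ (Remark \ref{rem3}), Proposition \ref{propdense} shows that any vector whose forward endpoint is a horocyclic limit point has $\hat{\pi}(h_\mathbb{R}(\tilde{v}))$ dense in all of $T^1M$ (Remark \ref{rem4}). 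A minimal set $X$ cannot equal $T^1M$ (orbits over nonhorocyclic points are not dense), so for \emph{every} $v\in X$ the endpoint $\tilde{v}(\infty)$ is nonhorocyclic, and Proposition \ref{propam} then gives almost minimizing immediately. Your proposed geometric lemma about rays crossing necks "essentially monotonically," and the search for a special $v\in K$, are therefore unnecessary; worse, your guiding heuristic is backwards: almost-minimizing rays correspond to \emph{nonhorocyclic} endpoints, whereas conical behaviour of the endpoint produces recurrent rays, the opposite of almost minimizing.

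The main failing step is your endgame. Your localization claim --- that for large $n$ the closure $\overline{h_\mathbb{R}(w_n)}$, $w_n=g_{\tau_n}(v)$, avoids a neighborhood of $v$ --- is precisely the negation of minimality (as $w_n\in K$, minimality forces $\overline{h_\mathbb{R}(w_n)}=K\ni v$), so it would have to be established from the geometry alone; but no mechanism is offered, and the one you gesture at does not work. Since the geodesic flow permutes strong stable manifolds, $\overline{h_\mathbb{R}(w_n)}=g_{\tau_n}\bigl(\overline{h_\mathbb{R}(v)}\bigr)$, so your claim is equivalent to $g_{-\tau_n}(v)\notin\overline{h_\mathbb{R}(v)}$, a statement about \emph{negative} geodesic times on which Theorem \ref{teorema} and the bounded necks give no purchase; on a first-kind surface, horocycles based deep in an end do sweep back through the core. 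The paper concludes differently: $g_{t_0}$ sends $h$-orbits to $h$-orbits, so $g_{t_0}(X)$ is again minimal, and since it meets $X$ one gets $g_{t_0}(X)=X$, hence $g_{nt_0}(v)\in X=\overline{h_\mathbb{R}(v)}$ for all $n$. Unwinding this with the Busemann function produces $\gamma_m\in\Gamma$ with $B_{\tilde{v}(\infty)}(\tilde{v}(0),\gamma_m^{-1}\tilde{v}(0))\to nt_0$, so the $\Gamma$-orbit of $\tilde{v}(0)$ enters every horodisk based at $\tilde{v}(\infty)$, i.e.\ the endpoint is horocyclic --- contradicting the nonhorocyclicity established above. (Your remark that non-closedness of $h_\mathbb{R}(v)$ needs checking is a fair observation --- the paper leaves it implicit --- but your exclusion of the properly embedded closed orbit is likewise only a sketch.)
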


I would like to thank my advisors M. Martínez and R. Potrie for their invaluable help while writing this article. Time spent in conversations with F. Dal'Bo was very helpful for understanding horocycle flows, specially in geometrically infinite and variable curvature contexts. I would also like to thank S. Burniol for a very useful hint to prove Propositions \ref{prop1} and \ref{prop2}, and for a careful reading of this text.

\section{Preliminaries}\label{prel}

If $u\in T^1\tilde{M}$, we denote by $g_\mathbb{R}(u)$ the geodesic passing through $u$ in $T^1\tilde{M}$, and by $u(\mathbb{R})$ the projection of this geodesic on $\tilde{M}$. So that $u(t)$ will denote the projection of $g_t(u)$ on $\tilde{M}$. We denote by $h_\mathbb{R}(u)$ the horocycle passing through $u$.

We will also denote by $\pi$ the projection from $T^1M$ to $M$, and by $\hat{\pi}$ the projection from $T^1\tilde{M}$ to $T^1M$. 

\subsection{Boundary at Infinity}

The boundary at infinity is the set of endpoints of all the geodesic rays. Here we give a formal definition.

\begin{Definition}\em
\label{obslim}
We are going to say that the geodesics directed by two vectors $\tilde{v}$ and $\tilde{v}' \in T^1\tilde{M}$ have the same endpoint if $$\sup_{t>0}d(\hat{\pi}(g_t(\tilde{v})),\hat{\pi}(g_t(\tilde{v}')))<\infty.$$ When this holds, we write: $\tilde{v}\sim_* \tilde{v}'$.
\end{Definition}

\begin{Definition}\em
The \em boundary at infinity \em of $\tilde{M}$ will be the set defined by $$\partial_{\infty}\tilde{M}:=T^1\tilde{M}\textit/ \sim_*.$$ 
\end{Definition}

For any $\tilde{v}\in T^1\tilde{M}$, we denote by $\tilde{v}(\infty)$ its equivalence class by the relation $\sim_*$.

Given two different points $\xi$ and $\eta$ of $\partial_\infty\tilde{M}$, we will denote by $(\xi,\eta)$ the geodesic on $\tilde{M}$ joining them. 

The action of $\mathrm{Isom}^+(\tilde{M})$ on $\tilde{M}$ can be naturally extended to $\tilde{M}\cup \partial_\infty\tilde{M}$.

An isometry of $\mathrm{Isom}^+(\tilde{M})$ is said to be 
\begin{itemize}
\item \emph{hyperbolic} if it has exactly two fixed points and both of them lie on $\partial_\infty\tilde{M}$. 
\item \emph{parabolic} if it has a unique fixed point and it lies on $\partial_\infty\tilde{M}$.
\item \emph{elliptic} if it has at least one fixed point in $\tilde{M}$. 
\end{itemize}

Every isometry $\mathrm{Isom}^+(\tilde{M})$ is either hyperbolic, elliptic or parabolic  (see \cite{BKS}). 

\subsection{The Busemann function and horocycles}

The Busemann function is one of the main tools we need to describe horocycles and their properties. In this section we show how to construct this function. 

\begin{Definition} For $x,y,z\in \tilde{M}$ we can define $$b:\tilde{M}\times \tilde{M}\times \tilde{M}\longrightarrow \mathbb{R}$$ $$b(x,y,z)=d(x,y)-d(x,z), x,y,z \in \tilde{M}.$$
\end{Definition}

As $d$ is a continuous function, $b$ is also continuous.

\begin{Remark}\label{busemannprops}
 Some properties of $b$ that can be deduced from the definition are:
\begin{enumerate}
\item $b(x,y,y)=0$ for all $x, y \in \tilde{M}$
\item $\mid b(x,y,z)-b(x,y,z')\mid \leq d(z,z')$ for all $x,y,z,z' \in \tilde{M}$
\item $b(x,y,z)=b(x,y,z')+b(x,z',z)$ for all $x,y,z,z'\in \tilde{M}$
\end{enumerate}

See chapter II.1 of \cite{WB} for proof. 

\begin{Definition}\em
Let $\{x_n\}_{n\in \mathbb{N}}$ be a sequence in $\tilde{M}$. Given $\xi \in \partial_\infty \tilde{M}$, consider a fixed point $o\in M$ and the geodesic ray $u[0,\infty)$ with $u(0)=o$ and $u(\infty)=\xi$. Consider also the geodesic rays $u_n[0,\infty)$ such that $u_n(0)=o$ and $u_n(t_{j_n})=x_n$ for some $t_{j_n}\in [0,\infty)$, and define $\xi_n:=u_n(\infty)$. Then, we say that $x_n\longrightarrow \xi$ if $\xi_n \longrightarrow \xi$ in $\partial_\infty \tilde{M}$. 
\end{Definition}

\end{Remark}

 Given $x,y \in \tilde{M}$, the map $b_y(x):\tilde{M}\longrightarrow \mathbb{R}$, defined as $$b_y(x)(z):=b(x,y,z),\ z\in \tilde{M}$$ is a continuous function on $\tilde{M}$.  Let us consider $C(\tilde{M})$ the space of continuous function on $\tilde{M}$ with the topology of the uniform convergence on bounded sets. Let $\{x_n\}$ be a sequence on $\tilde{M}$ such that $x_n\longrightarrow \xi\in \partial_\infty \tilde{M}$. Then $b_y(x_n)$ converges on $C(\tilde{M})$ to some function $B_\xi(y,\ )$. This will be the \textit{Busemann function} at $\xi$, based at $y$. Explicitly we have:
 \begin{equation}
 \label{busemann}
  B_\xi(y,z):=\lim_{x_n\longrightarrow \xi}d(x_n,y)-d(x_n,z),
 \end{equation}
where $x_n$ is a sequence on $\tilde{M}$ that goes to $\xi$. 
 
 This definition is independent of the choice of the sequence $x_n$ (see chapter II.1 of \cite{WB}).
 
 For all $\xi \in \partial_\infty\tilde{M}$, we have $B_\xi^y:\tilde{M}\longrightarrow \mathbb{R}$, where $B_\xi^y(z)=B_\xi(y,z)$ for all $z\in \tilde{M}$,  is a continuous function, and then the level set $(B_\xi^y)^{-1}(t)$ is a regular curve (meaning that it admits a $C^1$ arclength parametrization)  for all $t\in \mathbb{R}$ (see chapter IV.3 of \cite{WB}). Let us denote by $H_y(\xi,t)$ the level set $(B_\xi^y)^{-1}(t)$, and for each $p\in H_y(\xi,t)$ consider the only vector $\tilde{v}_\xi^p \in T_p^1\tilde{M}$ such that $\tilde{v}_\xi^p(\infty)=\xi$. Then the set $$\hat{H}_y(\xi,t):=\{ \tilde{v}_\xi^p:p\in H_y(\xi,t) \}$$ is the horocycle through $\tilde{v}_\xi^p$ in $T^1\tilde{M}$, for all $p\in H_y(\xi,t)$. We have that $\pi(\hat{H}_y(\xi,t))=H_y(\xi,t)$, and $\hat{H}_y(\xi,t)\subset T^1\tilde{M}$ is  the strong stable set of $\tilde{v}_\xi^p$ for the geodesic flow, which can also be parametrized by arclength. Now, the horocycle flow $h_s(v)$, pushes a vector $v$ along its strong stable manifold, through an arc of length $s$.

Given two elements $u$ and $v$ in $T^1\tilde{M}$, let $z_u=u(0)$ and $z_v=v(0)$ be their respective base points in $\tilde{M}$. Let us suppose that there are $t,s\in\mathbb{R}$ such that $g_t(u)=h_s(v)$ or, in other words, there is $t \in \mathbb{R}$ such that $g_t(h_\mathbb{R}(u))=h_\mathbb{R}(v)$. In this case $u(\infty)=v(\infty)$. Let us suppose that $u(\infty)=v(\infty)=\xi\in \partial_\infty\tilde{M}$. Then the Busemann function centered in $\xi$ evaluated at $(z_u,z_v)$ happens to be the real number $t$ mentioned above. We denote it by $$B_\xi(z_u,z_v)=t.$$



\begin{Remark}\label{rem1}
If $u\in T^1\tilde{M}$ is such that $u(0)=o$ and $u(\infty)=\xi$, then $$B_\xi(o,z)=\lim_{t\rightarrow \infty}[d(o,u(t))-d(z,u(t))]=\lim_{t\rightarrow \infty} t-d(z,u(t)).$$
\end{Remark}

\subsection{Limit set and classification of limit points}

The limit set is a special subset of the boundary at infinity. We classify limits points, and show their links with the behaviour of horocyclic orbits. 

\begin{Definition}\em
The \emph{limit set} $L(\Gamma)$ of the group $\Gamma$, is the set of accumulation points of an orbit $\Gamma z$, for some $z\in \tilde{M}$. This is well defined because all orbits have the same accumulation points (see chapter 1.4 of \cite{BKS}).  
\end{Definition}

One has $L(\Gamma)\subset \partial_\infty\tilde{M}$. Otherwise, a sub-sequence of the orbit would remain on a compact region of $\tilde{M}$, contradicting the fact that $\Gamma$ acts discontinuously on $\tilde{M}$.

In the limit set, we can distinguish two different kinds of points: 

\begin{Definition}\em
A limit point $\xi\in \partial_\infty\tilde{M}$ is said to be \emph{horocyclic} if given any $z\in \tilde{M}$, and $t\in \mathbb{R}$, there is $\gamma\in \Gamma$ such that $B_\xi(o,\gamma(z))>t$. Otherwise, we say $\xi$ is a \emph{nonhorocyclic} limit point. (See figure 1)
\end{Definition} 

\begin{figure}[ht!]  
\label{plh}
\begin{center}   
\includegraphics[width=120mm]{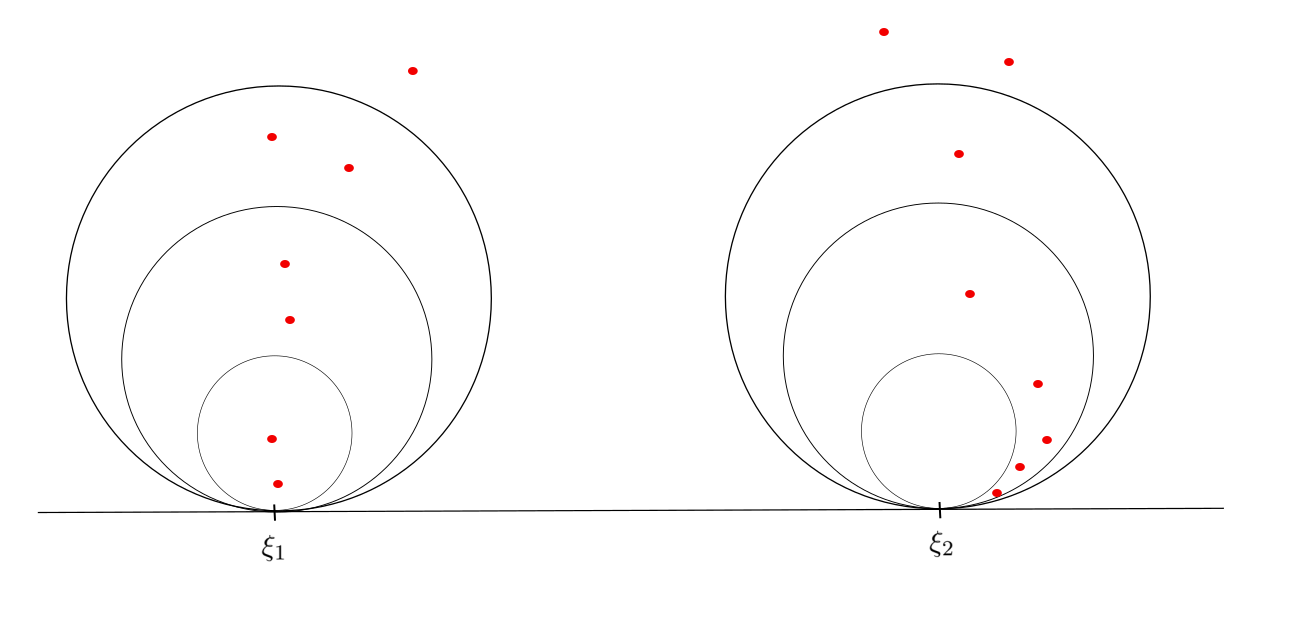}
 \end{center} 
 \caption{Here, $\xi_1$ is an horocyclic limit point. The points, representing some elements of the $\Gamma$-orbit of a point, reach all the horodisks based at $\xi_1$. The point $\xi_2$ on the other hand would be a nonhorocyclic limit point, as no element of the orbit reaches the smaller horodisk.}
 \end{figure}

The sets of the form $$\{z\in\tilde{M}:B_\xi(o,\gamma(z))>t,\},\ t\in \mathbb{R}$$ are called \emph{horodisks} based on $\xi$. So, in other words, a limit point $\xi$ is horocyclic if each horodisk based on $\xi$ intersects de orbit $\Gamma z$, for all $z\in \tilde{M}$. 

\begin{Remark}\label{remhoropoint}
Given a point $\xi\in \partial_\infty \tilde{M}$, if there is a sequence $\{\gamma_n\}\subset \Gamma$ such that $B_\xi(o,\gamma_n^{-1}(o))\xrightarrow[n\rightarrow{}\infty]\, \infty$ for any $o\in \tilde{M}$, then $\xi$ is an horocyclic limit point. 

This is because if $B_\xi(o,\gamma_n^{-1}(o))\xrightarrow[n\rightarrow{}\infty]\, \infty$, any horodisk $\{z\in\tilde{M}:B_\xi(o,\gamma(z))>t,\}$ will contain an element of the $\Gamma$-orbit of $o$.
\end{Remark}

We denote by $\Lambda_\Gamma$ the image by $\hat{\pi}$ of the set $\{\tilde{v}\in T^1\tilde{M}: \tilde{v}(\infty)\in L(\Gamma)\}$. 

\begin{Proposition}\label{propdense}
If $\xi\in L(\Gamma)$ is an horocyclic limit point, then for all $\tilde{v}\in T^1\tilde{M}$ such that $\tilde{v}(\infty)=\xi$, we have $\hat{\pi}(h_\mathbb{R}(\tilde{v}))$ is dense in $\Lambda_\Gamma$.
\end{Proposition}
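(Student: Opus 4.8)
The plan is to turn the density statement into an explicit approximation problem. Fix once and for all a vector $\tilde v$ with $\tilde v(\infty)=\xi$, write $z_0=\tilde v(0)$, and recall that the horocycle $h_\mathbb{R}(\tilde v)$ is the horocycle $\hat H_{z_0}(\xi,0)$ of $T^1\tilde M$, so that as $s$ runs over $\mathbb{R}$ the base points $h_s(\tilde v)(0)$ sweep out the whole horosphere $H_{z_0}(\xi,0)$. Since $\overline{\hat\pi(h_\mathbb{R}(\tilde v))}$ is closed and $\hat\pi$ is a covering map, to prove density in $\Lambda_\Gamma$ it is enough to show that an arbitrary $\tilde w\in T^1\tilde M$ with $\eta:=\tilde w(\infty)\in L(\Gamma)$ satisfies $\hat\pi(\tilde w)\in\overline{\hat\pi(h_\mathbb{R}(\tilde v))}$; equivalently, that there are $\gamma_n\in\Gamma$ and $s_n\in\mathbb{R}$ with $\gamma_n\,h_{s_n}(\tilde v)\to\tilde w$ in $T^1\tilde M$. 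Writing $o=\tilde w(0)$, I would use that a vector of $T^1\tilde M$ depends continuously on the pair (base point, forward endpoint), so that it suffices to produce $\gamma_n\in\Gamma$ with
\begin{equation}\label{eq:C1C2}
\gamma_n\xi\longrightarrow\eta\quad\text{in }\partial_\infty\tilde M,\qquad B_\xi(z_0,\gamma_n^{-1}(o))\longrightarrow 0 .
\end{equation}
Indeed, by the $\Gamma$--invariance $B_{\gamma_n\xi}(\gamma_n z_0,o)=B_\xi(z_0,\gamma_n^{-1}(o))$, the second condition says that the translated horosphere $\gamma_n H_{z_0}(\xi,0)=H_{\gamma_n z_0}(\gamma_n\xi,0)$, which carries the base points of $\gamma_n h_\mathbb{R}(\tilde v)$, passes within distance $|B_\xi(z_0,\gamma_n^{-1}(o))|\to 0$ of $o$; choosing $s_n$ so that $\gamma_n h_{s_n}(\tilde v)(0)$ is the foot of $o$ on this horosphere makes the base points converge to $o$, while the first condition forces the forward endpoints $\gamma_n\xi$ to converge to $\eta$. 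By the quoted continuity, $\gamma_n h_{s_n}(\tilde v)\to\tilde w$.

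The construction of the sequence $\gamma_n$ is the heart of the matter, and it is here that the horocyclic hypothesis on $\xi$ enters. The endpoint condition in \eqref{eq:C1C2} I would obtain from the minimality of the $\Gamma$--action on $L(\Gamma)$: since $M$ is geometrically infinite of negative curvature, $\Gamma$ is non--elementary, so every orbit $\Gamma\zeta$ with $\zeta\in L(\Gamma)$ is dense in $L(\Gamma)$ (see \cite{BKS}); in particular $\eta$ is a limit of points of the form $\gamma\xi$. The level condition is exactly what the horocyclic property governs: by definition each horodisk based at $\xi$ meets the orbit $\Gamma z_0$, and by Remark \ref{remhoropoint} there are elements driving $B_\xi(z_0,\cdot)$ through arbitrarily large values along the orbit. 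Geometrically this means that the $\Gamma$--translates of $H_{z_0}(\xi,0)$ realize every level based at $\xi$, so that, after transporting by the elements produced in the previous step, the level can be tuned until the translated horosphere sweeps across $o$, i.e. $B_\xi(z_0,\gamma_n^{-1}(o))\to 0$. Since horoballs based at $\xi$ shrink to the single point $\xi$ as their level tends to $+\infty$, the reformulation of Remark \ref{remhoropoint} also guarantees that the relevant base points converge to $\xi$ in $\tilde M\cup\partial_\infty\tilde M$, which is what couples the two conditions together.

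The main obstacle, and the step that requires real care, is precisely the \emph{simultaneous} realization of the two conditions in \eqref{eq:C1C2}: minimality pushes $\gamma_n\xi$ toward $\eta$ but a priori disturbs the level $B_\xi(z_0,\gamma_n^{-1}(o))$, while the horocyclic property fixes the level but must be applied without destroying the endpoint convergence. I plan to resolve this by an approximation--and--correction scheme: first choose $\delta_k\in\Gamma$ with $\delta_k\xi\to\eta$, then correct the level by composing with elements supplied by the horocyclic definition, controlling everything through the Busemann estimates of Remark \ref{busemannprops} (the $1$--Lipschitz dependence of $B_\xi$ and the cocycle relation), and finally extract a diagonal subsequence $\gamma_n$ for which both conditions in \eqref{eq:C1C2} hold. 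Because the whole argument is phrased through Busemann functions, horoballs and the intrinsic definition of a horocyclic point, no feature special to constant curvature is invoked; the places where variable curvature could bite—quantitative control of how horoballs shrink and of the transversality of nearby horospheres near $o$—I expect to absorb into soft continuity and compactness arguments (an Arzelà--Ascoli argument for the Busemann cocycles on bounded sets) rather than explicit formulas.
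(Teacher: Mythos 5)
The paper does not actually prove Proposition \ref{propdense}: it quotes it as Proposition B of \cite{FD2}, so your attempt has to be judged against Dal'Bo's argument rather than against anything in this text. The first half of your plan is correct and is indeed the standard skeleton: to approximate an arbitrary $\tilde w\in T^1\tilde M$ with base point $o$ and $\tilde w(\infty)=\eta\in L(\Gamma)$, it suffices to produce $\gamma_n\in\Gamma$ with $\gamma_n\xi\to\eta$ and $B_\xi(z_0,\gamma_n^{-1}(o))\to 0$; by invariance $B_{\gamma_n\xi}(\gamma_n z_0,o)=B_\xi(z_0,\gamma_n^{-1}(o))$, the distance from $o$ to the horosphere carrying the base points of $\gamma_n h_{\mathbb{R}}(\tilde v)$ tends to $0$, and continuity of $(p,\zeta)\mapsto \tilde v^p_\zeta$ in the cone topology gives $\gamma_n h_{s_n}(\tilde v)\to\tilde w$ for suitable $s_n$. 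The appeal to minimality of the $\Gamma$-action on $L(\Gamma)$ to get $\delta_k\xi\to\eta$ is also legitimate here, since $\Gamma$ is non-elementary.

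The genuine gap is the construction of the $\gamma_n$, which is the entire content of the proposition, and your sketch does not close it. The horocyclic hypothesis says only that the set of levels $\{B_\xi(z_0,\gamma(z)):\gamma\in\Gamma\}$ is unbounded above for each $z$; since $\Gamma$ is discrete there is no intermediate-value principle along an orbit, so this gives no element with level near $0$ --- let alone near an arbitrary prescribed $c\in\mathbb{R}$, which you in fact need, because density in $\Lambda_\Gamma$ requires approximating vectors pointing at $\eta$ based at \emph{every} level over $\eta$. Your correction scheme does not repair this: if $\delta_k\xi\to\eta$ and you compose with an element $\beta_m$ supplied by the horocyclic property, the new endpoint is $\delta_k(\beta_m\xi)$ (or $\beta_m(\delta_k\xi)$), and nothing constrains $\beta_m\xi$ to lie near $\xi$; the $1$-Lipschitz and cocycle properties of Remark \ref{busemannprops} control differences of Busemann levels, not the displacement of boundary points under group elements, and a diagonal extraction cannot manufacture compatibility between two sequences chosen independently. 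In effect, the assertion that the closure of $\{(\gamma\xi,\,B_\xi(z_0,\gamma^{-1}(o))):\gamma\in\Gamma\}$ contains $L(\Gamma)\times\{0\}$ is a restatement of the proposition, and your proposal asserts it rather than proves it. Any working proof must introduce a mechanism that tunes the level \emph{continuously} --- for instance exploiting that the geodesic-flow translates $g_t(h_{\mathbb{R}}(\tilde v))$ realize every level over $\xi$, and first placing a recurrent (conical) direction in $\overline{\hat\pi(h_{\mathbb{R}}(\tilde v))}$ before bootstrapping to all of $\Lambda_\Gamma$ --- which is the kind of argument carried out in \cite{FD2} and, in constant curvature, by Hedlund; that idea is absent from your plan.
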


The proof of this proposition can be found in \cite{FD2}, as Proposition B.

Given $v\in T^1M$ (or $T^1\tilde{M}$), we denote by $v[0,\infty)$ the geodesic ray  $g_t(v)_{t\in [0,\infty)}.$

\subsection{Almost minimizing geodesic rays}
The following proposition relates the behaviour of a geodesic ray with its endpoint. 

First, we recall the following definition:

\begin{Definition}\em
A geodesic ray $v[0,\infty)$ on $M$ is said to be \emph{almost minimizing} if  there is a positive real number $c$ such that $$d(v(t),v(0))\geq t-c\ \forall t\geq 0.$$ (See figure 2)
\end{Definition}

\begin{figure}[ht!]  
\label{cmin}
\begin{center}   
\includegraphics[width=120mm]{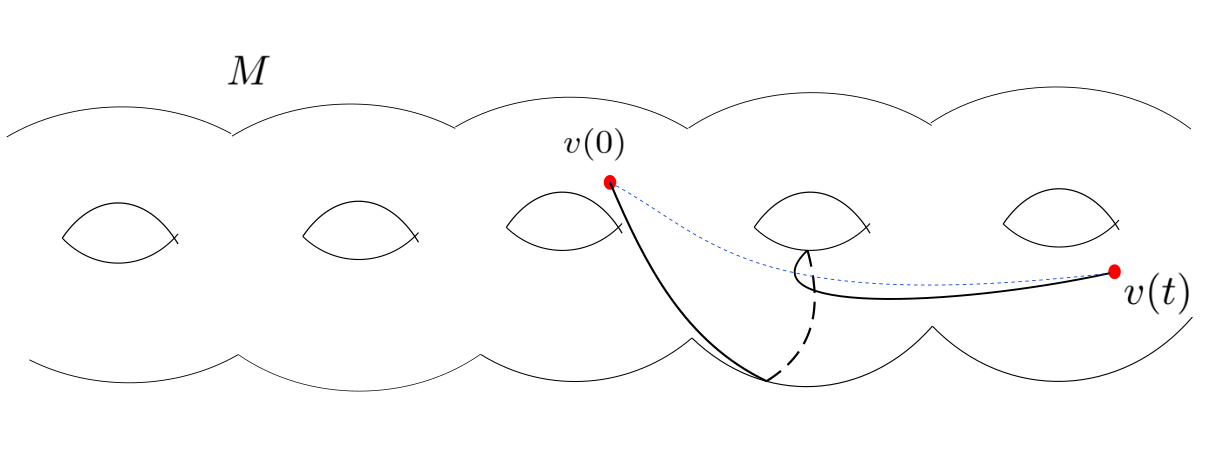}
 \end{center} 
 \caption{The projected geodesic ray starts at $v(0)$ and at time $t$ passes through the point $v(t)$. The distance between this two points is less than $t-c$. The blue dotted line represents the minimizing geodesic joining $v(0)$ and $v(t)$, which obviously has length $t$.}
 \end{figure}

\begin{Proposition}\label{propam}
Let $\xi\in L(\Gamma)$ and $\tilde{v}\in T^1\tilde{M}$ such that $\tilde{v}(\infty)=\xi$. Then the projected geodesic ray $v[0,\infty)$ over $M$, is almost minimizing if an only if $\xi$ is a nonhorocyclic limit point.
\end{Proposition}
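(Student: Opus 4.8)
The plan is to transfer both conditions to the universal cover $\tilde{M}$ and to compare them through the single quantity $t-d(\tilde v(t),\gamma o)$. First I would set $o:=\tilde v(0)$ and recall that for the covering projection $p\colon\tilde M\to M$ one has $d_M(p(x),p(y))=\min_{\gamma\in\Gamma}d_{\tilde M}(x,\gamma y)$; since $\Gamma$ acts properly discontinuously the orbit $\Gamma o$ is discrete, so this minimum is attained. Applying this to $v(t)=p(\tilde v(t))$ and $v(0)=p(o)$ gives
\[
t-d_M(v(t),v(0))=t-\min_{\gamma\in\Gamma}d_{\tilde M}(\tilde v(t),\gamma o)=\sup_{\gamma\in\Gamma}\bigl(t-d_{\tilde M}(\tilde v(t),\gamma o)\bigr),
\]
so $v[0,\infty)$ is almost minimizing exactly when $\sup_{t\ge 0}\sup_{\gamma}\bigl(t-d(\tilde v(t),\gamma o)\bigr)<\infty$. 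On the other side, using the Lipschitz property (property 2 of Remark \ref{busemannprops}, inherited by $B_\xi$), $\lvert B_\xi(o,\gamma z)-B_\xi(o,\gamma o)\rvert\le d(\gamma z,\gamma o)=d(z,o)$, the definition of a horocyclic point is insensitive to the base point $z$; hence I would reduce ``$\xi$ nonhorocyclic'' to the clean statement $\sup_{\gamma}B_\xi(o,\gamma o)<\infty$.

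The key step links these two suprema. For fixed $\gamma$ I set $f_\gamma(t):=t-d(\tilde v(t),\gamma o)$. Because $t\mapsto\tilde v(t)$ is a unit speed geodesic, the distance is $1$-Lipschitz along it: for $t'>t$ we get $d(\tilde v(t'),\gamma o)-d(\tilde v(t),\gamma o)\le d(\tilde v(t'),\tilde v(t))=t'-t$, hence $f_\gamma(t')-f_\gamma(t)\ge 0$, so each $f_\gamma$ is nondecreasing. By Remark \ref{rem1} (taking $z=\gamma o$) its limit is $\lim_{t\to\infty}f_\gamma(t)=B_\xi(o,\gamma o)$, and being nondecreasing it satisfies $f_\gamma(t)\le B_\xi(o,\gamma o)$ for all $t\ge 0$.

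With this, both implications fall out. If $\xi$ is nonhorocyclic, then $C:=\sup_\gamma B_\xi(o,\gamma o)<\infty$ and the last inequality yields $\sup_\gamma f_\gamma(t)\le C$ for every $t$; by the displayed identity this reads $d_M(v(t),v(0))\ge t-C$, and since $C\ge B_\xi(o,o)=0$ the ray is almost minimizing. Conversely, if the ray is almost minimizing with constant $c$, then $f_\gamma(t)\le\sup_{\gamma'}f_{\gamma'}(t)\le c$ for all $t$ and all $\gamma$; fixing $\gamma$ and letting $t\to\infty$ gives $B_\xi(o,\gamma o)\le c$, so $\sup_\gamma B_\xi(o,\gamma o)\le c<\infty$ and $\xi$ is nonhorocyclic.

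The main obstacle is the reverse implication: finiteness of $\sup_\gamma B_\xi(o,\gamma o)$ is an asymptotic ($t\to\infty$) fact, whereas almost minimizing demands a bound \emph{uniform} in $t$. The monotonicity of $f_\gamma$ is precisely what upgrades the limiting value into a uniform upper bound, and it is the crux of the argument; everything else is bookkeeping. The remaining points I would verify carefully are that the base-point change in the definition of a horocyclic point is harmless (handled by the Lipschitz estimate above) and that the minimum over $\Gamma$ is attained (proper discontinuity), both of which are routine.
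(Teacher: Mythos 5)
Your proof is correct and follows essentially the same route as the paper's: both hinge on Remark \ref{rem1} together with the monotonicity of $t\mapsto t-d(\tilde v(t),\gamma o)$ to upgrade the asymptotic Busemann bound into one uniform in $t$, and on the identity $d_M(v(0),v(t))=\min_{\gamma\in\Gamma}d_{\tilde M}(\tilde v(t),\gamma o)$ to pass between $\tilde M$ and $M$. If anything you are more explicit than the paper, which asserts the monotonicity without verification, silently takes $z=o$ in the definition of a nonhorocyclic point (your Lipschitz base-point argument via property 2 of Remark \ref{busemannprops} justifies this), and settles the converse with ``all these implications can be reverted''; your write-up fills those small gaps without changing the underlying argument.
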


\begin{proof}
Take a reference point $o$ and suppose without loss of generality that $\tilde{v}\in T^1_o\tilde{M}$. Let us suppose first that $\xi=\tilde{v}(\infty)$ is a nonhoroyclic limit point. Then there is a horodisk $H$ based at $\xi$ that does not contain any point of the $\Gamma$-orbit of $o$. Let us take $H=\{z\in  \tilde{M}:B_\xi(o,z)> k$, with $k>0\}$. Then for all $\gamma \in \Gamma$, one has $B_\xi (o,\gamma (o))\leq k$. And because of remark \ref{rem1}, this means that $$\lim_{t\rightarrow \infty}[d(o,\tilde{v}(t))-d(\gamma(o),\tilde{v}(t))]=\lim_{t\rightarrow \infty}[t-d(\gamma(o),\tilde{v}(t))]\leq k.$$ As this limit is not decreasing, we have for any $t>0$: $$t-d(\gamma(o),\tilde{v}(t))\leq k,$$ and then $$d(\gamma(o),\tilde{v}(t))\geq t-k.$$ As this happens for any $\gamma \in \Gamma$, down on the surface $M$, this implies that $$d(v(0),v(t))\geq t-k,$$ which means that $v[0,\infty)$ is an almost minimizing geodesic ray.
 All this implications can be reverted to prove that an almost minimizing geodesic ray, is projected from a geodesic ending on a nonhorocyclic limit point. 
\end{proof}

Also a proof of the proposition above can be found in \cite{SM}.

\subsection{Links between geodesic and horocyclic orbits}

In this section, $M$ will be an orientable geometrically infinite surface with a complete Riemanninan metric of negative curvature, $\tilde{M}$ its universal cover and $\Gamma$ its fundamental group. 

As we mentioned before, the geodesic flow on $T^1M$ is an Anosov flow (see \cite{KH}), and the stable manifolds, which are contracted by this flow (see ch. IV of \cite{WB}), have the level sets of the Busemann functions as their projections to $M$. 

The strong stable manifold of the geodesic flow is defined as follows:

\begin{Definition}\em
Consider the geodesic flow $g_t:T^1M\longrightarrow T^1M$, and take $v\in T^1M$. Then the \emph{strong stable manifold} of $v$ will be the set $$W^s(v):=\{u\in T^1M:\ d(g_t(v),g_t(u))\xrightarrow[t\rightarrow{}\infty]\, 0\}.$$
\end{Definition}

The following proposition gives a criteria we are going to use in the proof of Theorem \ref{teorema}.

\begin{Proposition}\label{Prop}
For $u \in T^1M$, if there are sequences $u_n \in T^1M$ and $r_n\in \mathbb{R}$ such that $u_n\longrightarrow u$ in $T^1M$, $r_n\longrightarrow r_0\in \mathbb{R}$, and $d(g_{t+r_n}(u_n), g_t(u))\xrightarrow[t\rightarrow{}\infty]\, 0$, then $g_{r_0}(u)\in \overline{h_{\mathbb{R}}(u)}$. 
\end{Proposition}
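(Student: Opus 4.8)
The plan is to read the hypothesis as a statement about the strong stable manifold of $u$ and then combine it with continuity of the geodesic flow. First I would use the flow property $g_{t+r_n}(u_n)=g_t(g_{r_n}(u_n))$ to rewrite the assumption $d(g_{t+r_n}(u_n),g_t(u))\to 0$ (as $t\to\infty$) in the equivalent form $d(g_t(g_{r_n}(u_n)),g_t(u))\to 0$. By the very definition of $W^s(u)$ given above, this says precisely that $g_{r_n}(u_n)\in W^s(u)$ for every $n$.

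Next I would invoke the identification, set up in Section~\ref{prel}, between the strong stable manifold of a vector for the geodesic flow and its horocyclic orbit: the strong stable set of $u$ is exactly the orbit swept out by the horocycle flow through $u$, so $W^s(u)=h_\mathbb{R}(u)$. Consequently each point $g_{r_n}(u_n)$ lies in $h_\mathbb{R}(u)$.

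Finally I would use the joint continuity of the geodesic flow in the base point and the time parameter: since $u_n\to u$ in $T^1M$ and $r_n\to r_0$ in $\mathbb{R}$, the map $(t,w)\mapsto g_t(w)$ being continuous yields $g_{r_n}(u_n)\to g_{r_0}(u)$. Thus $g_{r_0}(u)$ is a limit of points of $h_\mathbb{R}(u)$, and therefore $g_{r_0}(u)\in\overline{h_\mathbb{R}(u)}$, as claimed.

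The only step carrying genuine content is the identification $W^s(u)=h_\mathbb{R}(u)$, and this is exactly what the construction of horocycles through the strong stable foliation provides; the remaining manipulations are just the group law of the flow and the continuity of $(t,w)\mapsto g_t(w)$, so I do not anticipate a real obstacle beyond correctly citing that identification.
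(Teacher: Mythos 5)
Your proof is correct and takes essentially the same route as the paper's: both use the flow group law to read the hypothesis as $g_{r_n}(u_n)\in W^s(u)$, identify the strong stable manifold with the horocyclic orbit $h_\mathbb{R}(u)$ as set up in the preliminaries, and conclude by a continuity argument. The only difference is cosmetic: the paper writes $g_{r_n}(u_n)=h_{s_n}(u)$ and deduces $h_{s_n}(u)\to g_{r_0}(u)$ via equicontinuity of the family $\{g_{r_n}\}$, whereas you invoke joint continuity of $(t,w)\mapsto g_t(w)$ to get $g_{r_n}(u_n)\to g_{r_0}(u)$ directly, which is a slightly cleaner packaging of the same final step.
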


\begin{proof}
As $d(g_{t+r_n}(u_n), g_t(u))\xrightarrow[t\rightarrow{}\infty]\, 0$, this means that $g_{r_n}(u_n)$ is in the strong stable manifold of $u$ for all $n$. Then, there is a sequence $\{s_n\}_{n\in \mathbb{N}}$ such that $g_{r_n}(u_n)=h_{s_n}(u)$. Then $u_n=g_{-r_n}h_{s_n}(u)$. 

By hypothesis we have $u_n\xrightarrow[n\rightarrow{}\infty]\, u$, and then $g_{-r_n}h_n(u)\xrightarrow[n\rightarrow{}\infty]\, u$. It follows that $h_{s_n}(u)\xrightarrow[n\rightarrow{}\infty]\, g_{r_n}(u)$ in $T^1M$. This implies that $d(h_{s_n}(u),g_{r_n}(u))\xrightarrow[n\rightarrow{}\infty]\, 0$, and as $r_n\longrightarrow r_0$, the family $\{g_{r_n}\}$ is equicontinuous, and we finally have $$h_{s_n}(u)\xrightarrow[n\rightarrow{}\infty]\, g_{r_0}(u),$$ where $h_{s_n}(u)$ is a sequence on $h_\mathbb{R}(u)$, and so $g_{r_0}(u)\in \overline{h_\mathbb{R}(u)}$ as we wanted to see.  
\end{proof}

\section{Geometric properties of horocycles}

In this section, we prove some geometric properties of horocycles and the Busemann function, which are going to be useful tools in the proof of Theorem \ref{teorema}.

First, we introduce some additional notation:
If $\gamma\in \Gamma$ is an hyperbolic isometry $(\gamma^-,\gamma^+)$ is the axis of $\gamma$, where $\gamma^-, \gamma^+ \in  \partial_\infty \tilde{M}$ are its fixed points, we will denote by $\mathscr{C}_{\gamma}(p)$ the curve passing through $p$ whose points are at a constant distance from $(\gamma^-,\gamma^+)$. In general, if $c:\mathbb{R}\longrightarrow \tilde{M}$ is a geodesic, then $\mathscr{C}_c(p)$ will be the curve passing through $p$ whose points are at a constant distance from $c(\mathbb{R})$.

For any regular connected curve $\mathscr{C}$, and $p$, $q\in  \mathscr{C}$, we will write $[p,q]_\mathscr{C}$ to denote the arc contained in $\mathscr{C}$ joining $p$ and $q$.

Finally, for $\xi\in \partial_{\infty} \tilde{M}$ and $p\in \tilde{M}$, the horocycle based at $\xi$ passing through $p$ will be denoted by $\mathscr{H}_\xi (p)$.

\begin{Remark} \label{regular1}
For any hyperbolic isometry $\gamma\in \Gamma$ and any $p\in \tilde{M}$, the curve $\mathscr{C}_{\gamma}(p)$ is a regular curve. 
In fact, one can construct charts of $\mathscr{C}_{\gamma}(p)$ from the charts of the axis of $\gamma$, which is a geodesic.
\end{Remark}

The tools for proving the following proposition can be found in Ch. 9 of \cite{dC}.

\begin{Proposition}\label{regularorthogonal1}
The distance from a point $p\in \tilde{M}$ and any closed submanifold $\tilde{N}$ of $\tilde{M}$ is attained by a minimizing geodesic which is orthogonal to $\tilde{N}$. 
\end{Proposition}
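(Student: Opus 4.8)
The plan is to prove two things: existence of a minimizing geodesic realizing the distance, and orthogonality of that geodesic to $\tilde{N}$. Both are standard Riemannian facts, so I would keep the argument clean and geometric.

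\medskip

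\noindent\textbf{Existence of the minimizing geodesic.} First I would establish that the distance $d(p,\tilde{N}) = \inf_{q\in\tilde{N}} d(p,q)$ is attained. The key point is that $\tilde{M}$ is complete and $\tilde{N}$ is closed. Take a minimizing sequence $q_n \in \tilde{N}$ with $d(p,q_n)\to d(p,\tilde{N})=:\rho$. The sequence $\{q_n\}$ is bounded (all $q_n$ lie in the closed ball $\overline{B}(p,\rho+1)$ for large $n$), and since $\tilde{M}$ is complete, by Hopf--Rinow this closed ball is compact. Hence a subsequence converges to some $q_*\in \tilde{M}$, and since $\tilde{N}$ is closed we have $q_*\in\tilde{N}$ with $d(p,q_*)=\rho$. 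Again by Hopf--Rinow, completeness guarantees a minimizing geodesic $\sigma:[0,\rho]\to\tilde{M}$ from $p=\sigma(0)$ to $q_*=\sigma(\rho)$ parametrized by arclength.

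\medskip

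\noindent\textbf{Orthogonality.} Next I would show $\sigma'(\rho)$ is orthogonal to $T_{q_*}\tilde{N}$. This is the first variation of arclength argument. Suppose for contradiction that $\sigma'(\rho)$ has a nonzero component tangent to $\tilde{N}$; pick a smooth curve $c:(-\varepsilon,\varepsilon)\to\tilde{N}$ with $c(0)=q_*$ and $c'(0)=w$ a tangent vector to $\tilde{N}$ that is not orthogonal to $\sigma'(\rho)$. Consider the function $f(s)=d(p,c(s))^2$, which attains a minimum at $s=0$ by the minimality of $q_*$. Using the first variation formula for the energy (or for the distance function, which is smooth away from cut points and $q_*$ is not a cut point along a strictly minimizing geodesic), I would compute
\begin{equation*}
\frac{d}{ds}\Big|_{s=0} d(p,c(s)) = -\langle \sigma'(\rho), w\rangle .
\end{equation*}
Since $s=0$ is a minimum, this derivative must vanish for every tangent vector $w\in T_{q_*}\tilde{N}$, forcing $\langle \sigma'(\rho), w\rangle = 0$ for all such $w$. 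This is precisely the statement that $\sigma$ meets $\tilde{N}$ orthogonally at $q_*$, giving the contradiction with our assumption and completing the proof.

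\medskip

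\noindent\textbf{Main obstacle.} The part requiring the most care is justifying the first variation computation, since the distance function $q\mapsto d(p,q)$ is only smooth away from the cut locus of $p$. I would sidestep this by working with the energy functional on the space of paths rather than the distance directly, or by noting that along a strictly minimizing geodesic segment the endpoint is not conjugate nor a cut point, so $d(p,\cdot)$ is smooth in a neighborhood of $q_*$ and the classical first variation formula applies verbatim. This is exactly the material developed in Chapter 9 of \cite{dC}, which the paper has already flagged as the reference, so the variational machinery can be invoked directly.
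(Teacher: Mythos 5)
Your proof is correct, and it is precisely the standard argument (Hopf--Rinow compactness of closed balls for existence, first variation of arclength for orthogonality) that the paper does not write out at all, delegating the entire proposition to Chapter 9 of \cite{dC} with the remark that the tools can be found there. One caveat worth flagging: your claim that the endpoint of a strictly minimizing segment ``is not conjugate nor a cut point'' is false in general (on a round sphere the antipode is a cut point reached by minimizing geodesics), so $d(p,\cdot)$ need not be smooth at $q_*$; this is harmless here both because your fallback via the energy functional (equivalently, do Carmo's trick of comparing $d(p,c(s))$ with the lengths of a variation $\sigma_s$ of $\sigma$, which supports $f$ from above and has vanishing first variation $\langle \sigma'(\rho), w\rangle$ at an interior minimum) is sound, and because in this paper $\tilde{M}$ is the universal cover of a complete negatively curved surface, hence a Cartan--Hadamard manifold with empty cut locus, so the worry is vacuous. (The sign in your displayed formula is also off --- with the initial point fixed the first variation is $+\langle \sigma'(\rho), w\rangle$ --- but since the derivative must vanish at the interior minimum in any case, nothing depends on it.)
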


\begin{Corollary}\label{regularorthogonal2}
The distance between two  closed submanifolds $\tilde{N}$ and $\tilde{W}$ of $\tilde{M}$ is attained by a geodesic orthogonal to both $\tilde{N}$ and $\tilde{W}$. 
\end{Corollary}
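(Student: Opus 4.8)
The plan is to realize the distance by a single pair of points, one on each submanifold, and then apply Proposition \ref{regularorthogonal1} at each endpoint separately.

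Write $D := d(\tilde{N},\tilde{W}) = \inf\{d(x,y): x\in \tilde{N},\ y\in \tilde{W}\}$. The first task is to produce points $p\in \tilde{N}$ and $q\in \tilde{W}$ with $d(p,q)=D$. I would take a minimizing sequence $(p_n,q_n)$, and, using Proposition \ref{regularorthogonal1}, assume each $q_n$ is a nearest point of $\tilde{W}$ to $p_n$, so that $d(p_n,q_n)=d(p_n,\tilde{W})$. To extract a convergent subsequence I must confine the sequence to a compact region: since $\tilde{M}$ is a complete (Cartan--Hadamard) manifold, closed balls are compact by Hopf--Rinow, so it suffices to keep $(p_n,q_n)$ bounded. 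This is where the main obstacle lies --- for unbounded closed submanifolds the infimum need not be attained in general (asymptotic geodesics are the standard counterexample), so one must use that in negative curvature the distance between the relevant curves (geodesics and their equidistant curves $\mathscr{C}_\gamma$) is a convex, proper function that tends to infinity away from its minimum whenever the two submanifolds have no common endpoint in $\partial_\infty\tilde{M}$. Granting this, a subsequence converges to a pair $(p,q)$ with $d(p,q)=D$, by continuity of $d$ and closedness of $\tilde{N}$ and $\tilde{W}$.

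With $p$ and $q$ in hand, let $\sigma$ be the minimizing geodesic from $p$ to $q$, of length $D$. Since $D = d(\tilde{N},\tilde{W}) \le d(p,\tilde{W}) \le d(p,q) = D$, equality forces $d(p,\tilde{W}) = d(p,q)$, so $q$ realizes the distance from the point $p$ to $\tilde{W}$; Proposition \ref{regularorthogonal1} then says $\sigma$ meets $\tilde{W}$ orthogonally at $q$. By the symmetric argument, $d(q,\tilde{N}) = d(q,p)$, so $p$ realizes the distance from $q$ to $\tilde{N}$, and applying Proposition \ref{regularorthogonal1} again to the reversed geodesic shows that $\sigma$ meets $\tilde{N}$ orthogonally at $p$. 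Hence $\sigma$ is orthogonal to both submanifolds, which is the claim.

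I expect the orthogonality step to be immediate once existence is settled, the two applications of Proposition \ref{regularorthogonal1} being essentially symmetric. The genuine work is the compactness argument guaranteeing that the infimum defining $d(\tilde{N},\tilde{W})$ is actually attained; for the submanifolds that occur in the paper (axes of hyperbolic isometries and their equidistant curves $\mathscr{C}_\gamma$) this follows from the divergence of geodesics in strictly negative curvature.
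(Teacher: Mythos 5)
Your orthogonality argument is exactly the intended one, and it is worth pointing out that the paper offers no proof of this corollary at all: it is stated bare, the only justification being the pointer to Ch.~9 of \cite{dC} that precedes Proposition \ref{regularorthogonal1}. So your write-up is more careful than the source. The squeeze $D \le d(p,\tilde{W}) \le d(p,q) = D$ and the two symmetric applications of Proposition \ref{regularorthogonal1} are correct; the one refinement I would ask for is that the proposition, as stated, only asserts the existence of \emph{some} orthogonal minimizing geodesic from $p$ to $\tilde{W}$, whereas you need that \emph{your} geodesic $\sigma$ is orthogonal at the particular foot point $q$. That stronger fact is what the first-variation argument in do Carmo actually proves (any minimizing geodesic from a point to a nearest point of a submanifold meets it perpendicularly), so invoke that argument rather than the bare existence statement; in the Cartan--Hadamard setting the issue is in any case mild, since geodesics between points are unique.

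Your main worry, attainment of the infimum, is well placed: as literally stated the corollary is false for general closed noncompact submanifolds (two asymptotic geodesics in $\tilde{M}$ have distance $0$, never attained), and the paper silently passes over this. Your proposed repair, via convexity and properness of the distance function in negative curvature when the two curves share no endpoint in $\partial_\infty\tilde{M}$, does cover the cases the paper needs, but it is more machinery than is required: in the paper's sole application of this corollary (the proof of Proposition \ref{propi}), attainment is automatic by construction, since the equidistant curve $\mathscr{C}_c(p)$ lies at constant distance $l$ from $c(\mathbb{R})$, so each pair $(a(t),c(t))$ already realizes the distance between the two curves, and the corollary is invoked only to extract orthogonality at both feet. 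The honest formulation would therefore either hypothesize that the distance is attained or restrict to the curves actually used; with that emendation, your proof is complete and is essentially the argument the paper leaves implicit.
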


 \begin{Proposition}\label{propi}
 Consider $\zeta \in \partial_\infty \tilde{M}$, a point $p\in \tilde{M}$, and $c:\mathbb{R}\longrightarrow \tilde{M}$ a geodesic. Then: 
 
 \begin{enumerate}\label{prop1}
 \item $\mathscr{C}_c(p) \cap \mathscr{H}_\zeta (p)$ contains at most two points.
 \item Up to changing the orientation of $\gamma$, there is an increasing function $\delta:\mathbb{R}_+\longrightarrow \mathbb{R}_+$ such that if $q\in [p,\gamma(p)]_{\mathscr{C}_{\gamma}(p)}$ and $d(p,q)>\epsilon$, then $|B_\zeta(q,p)|>\delta(\epsilon)$. In particular, $|B_\zeta(p,\gamma(p))|>\delta(\epsilon)$. 
 
\end{enumerate}   
 \end{Proposition}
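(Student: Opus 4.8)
The plan is to treat the two assertions in turn: the first is the geometric core, and the second will follow from it together with the $\gamma$-equivariance of the equidistant curve. Throughout, write $HB=\{x\in\tilde M:\ B_\zeta(p,x)\ge 0\}$ for the horoball bounded by $\mathscr H_\zeta(p)$ and $T=\{x:\ d(x,c)\le r\}$, with $r=d(p,c)$, for the tube bounded by the two equidistant curves. Both sets are convex: $HB$ because $B_\zeta(p,\cdot)$ is concave, and $T$ because $d(\cdot,c)$ is convex (here the minimizing perpendiculars of Propositions \ref{regularorthogonal1}--\ref{regularorthogonal2} are what let us treat $d(\cdot,c)$ and set up Fermi coordinates along $c$). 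For the first assertion I would reduce the count to a connectedness statement. Parametrize $\mathscr C_c(p)$ by the arclength $s$ of the foot point $c(s)$ of the perpendicular, so that $\phi(s)$ is the point of $\mathscr C_c(p)$ over $c(s)$, and set $g(s)=B_\zeta(p,\phi(s))$. Intersection points of $\mathscr C_c(p)$ with $\mathscr H_\zeta(p)$ are exactly the zeros of $g$. Since the ideal endpoints of $\mathscr C_c(p)$ are $c(\pm\infty)$, which (in the main case) are distinct from $\zeta$, one has $g(s)\to-\infty$ at both ends, so $g$ has a maximum and the number of zeros is even; the whole point is to upgrade this to ``at most two'', i.e. to show that $\mathscr C_c(p)\cap HB$ is a single arc.

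The mechanism I would use for the upgrade is a second–order comparison for $g$. Differentiating twice gives $g''=\mathrm{Hess}\,B_\zeta(\dot\phi,\dot\phi)+\langle\nabla B_\zeta,\ddot\phi\rangle$, where $\ddot\phi$ is the covariant acceleration (geodesic curvature vector) of $\mathscr C_c(p)$. The Hessian term is the second fundamental form of the horocycle foliation, which the curvature bound $K\le-\kappa^2$ forces to have size at least $\kappa$ in the horocyclic direction; the second term involves the curvature of the equidistant curve, whose size is controlled along the perpendicular geodesics by the Riccati equation $u'=-u^2-K$ with $u(0)=0$. At a critical point of $g$ the tangent $\dot\phi$ is perpendicular to $\nabla B_\zeta$, i.e. $\mathscr C_c(p)$ is tangent to a horocycle, and there $g''$ reduces to a comparison between the horocyclic curvature (at least $\kappa$) and the curvature of the equidistant curve. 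When this comparison has the right sign every critical point is a strict maximum, whence $g$ is unimodal and has at most two zeros, proving the assertion.

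The hard part will be exactly this sign at the critical points, and it is genuinely where the negative curvature has to be used: convexity of $HB$ and $T$ alone is \emph{not} enough, since two strictly convex curves can meet four times (as two ellipses do), and this is realized by the ideal configuration of a horoball and a tube. As motivation, in constant curvature the equation $g=\mathrm{const}$ is, after the natural reparametrization, literally a quadratic, which is the source of the bound two. In variable curvature the delicate case is when $\zeta$ lies on the far side of the axis $c$ from $p$: there the equidistant curve can in principle be locally more curved than the horocycle, so the comparison cannot rest on $K\le-\kappa^2$ pointwise alone and must also use the global position of the endpoints $c(\pm\infty)$ outside every horoball based at $\zeta$ (equivalently, that $\mathscr C_c(p)$ leaves $HB$ toward both ends). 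Making this robust is the step I expect to require the most care.

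For the second assertion I would deduce everything from the first. Consider $g(q)=B_\zeta(p,q)$ along the whole equidistant line $\mathscr C_\gamma(p)$, noting $|B_\zeta(q,p)|=|g(q)|$; by the first part $g$ vanishes at $p$ and at most one further point $p'$. Since $\gamma$ preserves $\mathscr C_\gamma(p)$ and translates along it, the points $\gamma(p)$ and $\gamma^{-1}(p)$ lie on opposite sides of $p$; replacing $\gamma$ by $\gamma^{-1}$ if necessary, I can arrange that $p'$ does not lie on the half–open fundamental arc from $p$ to $\gamma(p)$, and in particular that $g(\gamma(p))\neq 0$. On that arc $g$ then has no zero other than $p$, hence constant sign away from $p$, so $|g|$ is continuous and strictly positive on $\{q\in[p,\gamma(p)]_{\mathscr C_\gamma(p)}:\ d(p,q)\ge\epsilon\}$. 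Because this set is compact, $\delta(\epsilon):=\inf|g|$ over it is positive and nondecreasing in $\epsilon$, and a harmless modification makes it strictly increasing; then $|B_\zeta(q,p)|=|g(q)|>\delta(\epsilon)$ whenever $d(p,q)>\epsilon$, and the special case $q=\gamma(p)$ gives the final ``in particular''.
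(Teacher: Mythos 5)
Your part~1 has a genuine gap, and you have located it yourself: the whole argument hinges on the sign of $g''$ at a critical point, i.e.\ on the comparison between the second fundamental form of the horocycle through the tangency point and the geodesic curvature of the equidistant curve $\mathscr{C}_c(p)$ there, and this comparison is exactly what you do not establish. Your diagnosis of why it is hard is correct: both quantities are values of Riccati solutions, but along \emph{different} geodesics (the ray toward $\zeta$ for the horocycle, the perpendicular from the axis for the equidistant curve), so in variable curvature the pointwise bound $K\le-\kappa^2$ gives no definite sign — one can arrange very negative curvature along the perpendicular segment and milder curvature along the ray to $\zeta$, making the equidistant curve locally more curved than the horocycle at the tangency. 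Since the "strict maximum at every critical point" claim is the entire content of the at-most-two-zeros statement (as you note, convexity of the horoball and the tube alone allows four intersections), the proposal as written does not prove part~1, and the paper's applications (Lemma \ref{lema1}) need part~1 in precisely the variable-curvature setting where your comparison breaks.

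The paper closes this step by a first-order argument that avoids curvature comparisons altogether. At a critical point $a(t_1)$ of $g$, the gradient $\nabla B_\zeta^p$ is orthogonal to $\mathscr{C}_c(p)$, hence parallel to the common perpendicular $b_{t_1}$ from the axis (Corollary \ref{regularorthogonal2} gives that $b_{t_1}$ is orthogonal to both curves); so $b_{t_1}$, suitably oriented, is a geodesic ending at $\zeta$. Two critical points $t_1\neq t_2$ would then produce a geodesic triangle with vertices $c(t_1)$, $c(t_2)$, $\zeta$ having two right angles on $c$ and angle $0$ at the ideal vertex, i.e.\ angle sum $\pi$, contradicting Gauss--Bonnet since $K<0$. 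Hence $g$ has at most one critical point, is strictly monotone on either side of it, and meets each level at most twice — no Riccati analysis, no upper curvature bound, and it handles uniformly the configuration ($\zeta$ on the far side of the axis) that defeats your pointwise comparison. You may want to keep your part~2: your compactness argument on the fundamental arc $[p,\gamma(p)]_{\mathscr{C}_\gamma(p)}$ (choose the orientation of $\gamma$ so the possible second zero $p'$ avoids the arc, take $\delta(\epsilon)=\inf|g|$ over $\{d(p,q)\ge\epsilon\}$, then make $\delta$ strictly increasing) is a correct deduction from part~1 and is in fact a cleaner rendering than the paper's monotonicity claim for $|B_\zeta^p(a(t))|$, which need not literally hold past the critical point; but it is downstream of the unproved step.
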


 \begin{proof}
 First, let us give a parametrization $a(t)$ of the curve $\mathscr{C}_{c}(p)$ such that for each $t\in \mathbb{R}$ we have that $d(a(t),c(t))$ is constant and equal to $l=d(p,c(\mathbb{R}))$, and $p=a(0)$. Let $b_t:[0,l]\longrightarrow \tilde{M}$ the geodesic joining $a(t)$ and $c(t)$ (see, figure below). Now we write $\dot{b}_t(s)=\frac{\partial b_t}{\partial s}(s)$, with $s\in [0,l]$. Also $\dot{a}$ and $\dot{c}$ will refer to $\frac{\partial a}{\partial t}$ and $\frac{\partial c}{\partial t}$ respectively. We have then that $\langle \dot{b}_t(0),\dot{a}(t)\rangle= \langle \dot{b}_t(l),\dot{c}(t)\rangle=0$. This holds since the curves parametrized by $a(t)$ and $c(t)$ are closed submanifolds of $\tilde{M}$, and in view of Corollary \ref{regularorthogonal2}, the distance between them is assumed by a geodesic perpendicular to both $c(\mathbb{R})$ and $a(\mathbb{R})$. 
 
 If we look at the Busemann function $B_\zeta^p(a(t))$ along the curve $a(t)$, where $B_\zeta^p(z):=B_\zeta(p,z)$ for all $z\in \tilde{M}$, we see that its derivative vanishes if and only if $\langle \nabla B_\zeta^p(a(t)), \dot{a}(t)\rangle=0$, as the directional derivative of a function is zero if and only if the gradient of the function is orthogonal to the direction of the derivative. We are going to show that this derivative vanishes at most for one value of $t$. If we show this, then $\mathscr{C}_c(p)$ can only meet a level set of $B_\zeta^p$ at most two times, as we want to prove.

 \begin{figure}[ht!]  
 \label{horo1}
\begin{center}   
\includegraphics[width=120mm]{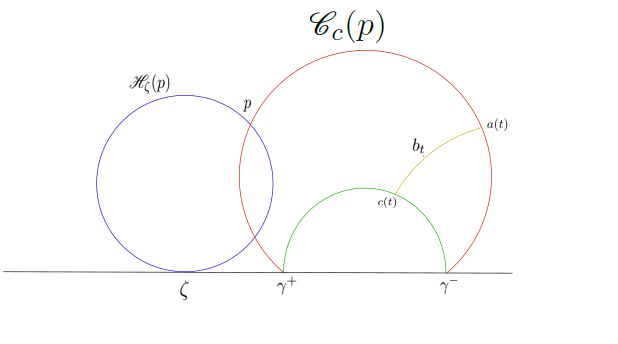}
 \end{center} 

 \end{figure}

 Suppose then that there is $t_1$ such that $\langle \nabla B_\zeta^p(a(t_1)), \dot{a}(t_1)\rangle=0$. Then, $\nabla B_\zeta^p(a(t_1))=K\dot{b}_{t_1}(0)$ for some $K\in \mathbb{R}$, as $\dot{b}_{t_1}(0) \perp \dot{a}(t_1)$. Then, the geodesic ray directed by $\dot{b}_{t_1}(0)$ (or $-\dot{b}_{t_1}(0)$) has the same endpoint as $\nabla B_\zeta^p(a(t_1))$, which is $\zeta$. 
 
 Suppose now that there is an other $t_2$ for which $\langle \nabla B_\zeta^p(a(t_2)), \dot{a}(t_2)\rangle=0$, then we also have $\nabla B_\zeta^p(a(t_2))=\hat{K}\dot{b}_{t_2}(0)$, for some $\hat{K}\in \mathbb{R}$, and we can assume as well that geodesic ray directed by $\dot{b}_{t_2}(0)$ has endpoint $\zeta$. Then, the geodesic triangle with vertices $\zeta$, $c(t_1)$ and $c(t_2)$ would have two right angles, and an angle equal to $0$, contradicting Gauss-Bonnet theorem: we have that the integral of the curvature of the surface on the interior region of the triangle, equals $\pi$ minus the sum of the interior angles of the triangle (see chapter 7 of \cite{ST} for a proof). As our surfaces has negative curvature, this integral should be strictly negative,   so the interior angles of the triangle cannot have sum equal to $\pi$. Then, the derivative of $B_\zeta^p(a(t))$ can only vanish for one value of $t$, as we wanted to see. 
 
 Now we are going to prove the second statement. As the Busemann function and $a(t)$ are continuous, $B_\zeta^p(a(t))$ is continuous as well. In one hand we have $B_\zeta^p(a(0))=0$, and in the other hand $\mathscr{C}_c(p)$ only meets at most in one other point the set of level $0$ of $B_\zeta^p$. Then, choosing the appropriate orientation for $a(t)$, we have that $|B_\zeta^p(a(t))|$ is an increasing function of $t$. As $|B_\zeta^p(a(t))|$ is a continuous function of $t$, for every $\epsilon>0$ there is $\delta(\epsilon)>0$ such that if $d(p,a(t))>\delta(\epsilon)$ then $|B_\zeta^p(a(t))|>\epsilon$, and as $|B_\zeta^p(a(t))|$ is increasing, then $\delta$ is also increasing.

 \end{proof}

  \begin{Proposition}\label{prop2}
 Consider $\zeta, \eta \in \partial_\infty \tilde{M}$ with $\zeta\neq \eta$, and a point $p\in \tilde{M}$. 
  Then we have:
 \begin{enumerate}
 \item The set $\mathscr{H}_\eta (q)\cap \mathscr{H}_\zeta (p)$ contains at most two points.
 \item If $\eta$ is the fixed point of a parabolic isometry $\gamma$ then, and $z\in \mathscr{H}_\eta (q)\cap \mathscr{H}_\zeta (p)$, then up to changing the orientation of $\gamma$, we have that for every $\epsilon>0$ there is a $\delta(\epsilon)$, with $\delta(\epsilon)$ being an increasing function of $\epsilon$, such that if $x\in [z,\gamma(z)]_{\mathscr{H}_\eta (q)}$ and $d(z,x)>\epsilon$, then $|B_\zeta(x,z)|>\delta(\epsilon)$. In particular $|B_\zeta(z,\gamma(z))|>\delta(\epsilon)$.
\end{enumerate}   
 \end{Proposition}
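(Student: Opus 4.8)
The plan is to follow the structure of the proof of Proposition \ref{propi}, replacing the equidistant curve $\mathscr{C}_c(p)$ by the horocycle $\mathscr{H}_\eta(q)$ and the Gauss--Bonnet argument by a direct observation about the geodesic $(\eta,\zeta)$. First I would parametrize $\mathscr{H}_\eta(q)$ by arclength as $a(s)$ with $a(0)=z$, and study the function $f(s):=B_\zeta(z,a(s))=B_\zeta^z(a(s))$ along it. Exactly as in Proposition \ref{propi}, $f'(s)=0$ precisely when $\nabla B_\zeta^z(a(s))$ is orthogonal to $\dot{a}(s)$, i.e. when $\nabla B_\zeta^z(a(s))$ is parallel to the normal of the horocycle. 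Since $\mathscr{H}_\eta(q)$ is a level set of $B_\eta$, its normal at $a(s)$ is the unit vector $\nabla B_\eta(a(s))$, which directs the geodesic through $a(s)$ with endpoint $\eta$; likewise $\nabla B_\zeta^z(a(s))$ directs the geodesic through $a(s)$ with endpoint $\zeta$. As both gradients are unit vectors, a critical point forces these two geodesics to coincide, which happens exactly when $a(s)$ lies on the geodesic $(\eta,\zeta)$.

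The crux of the first statement is then that $(\eta,\zeta)$ is a geodesic ending at $\eta$, so it meets every horocycle based at $\eta$ exactly once (the Busemann function $B_\eta$ is strictly monotone along a geodesic ending at $\eta$). Hence $f$ has a unique critical point $s_0$, so it is strictly monotone on each side of $s_0$ and therefore takes each value at most twice. This shows that $\mathscr{H}_\eta(q)$ meets each level set of $B_\zeta^z$ — that is, each horocycle based at $\zeta$, in particular $\mathscr{H}_\zeta(p)$ — in at most two points, which is the first statement. This clean argument is what replaces the ``two right angles plus an ideal vertex'' Gauss--Bonnet step of Proposition \ref{propi}.

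For the second statement I would first note that, since $\gamma$ is parabolic with fixed point $\eta$, it preserves $B_\eta$ and its Busemann cocycle vanishes (standard for parabolic isometries), so $\gamma$ maps each horocycle based at $\eta$ to itself; in particular $\gamma(z)\in\mathscr{H}_\eta(q)$, so the arc $[z,\gamma(z)]_{\mathscr{H}_\eta(q)}$ is well defined, and $\gamma$ acts on this horocycle as a fixed-point-free translation, so $\gamma$ and $\gamma^{-1}$ move $z$ in the two opposite directions along $a$. The observation is that on the side of $z=a(0)$ lying away from $s_0$, the function $f$ is strictly monotone and, starting from $f(0)=0$, never returns to $0$; hence $|f(s)|=|B_\zeta^z(a(s))|$ is strictly increasing there (this uses only that $s_0$ is the unique critical point and $f(0)=0$, not the behaviour of $f$ at the ends). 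Choosing the orientation of $\gamma$ so that $\gamma(z)$ lands on this side, the whole arc $[z,\gamma(z)]_{\mathscr{H}_\eta(q)}$ sits in the monotone region, and $|B_\zeta(x,z)|=|B_\zeta^z(x)|$ becomes a continuous, strictly increasing function of the arclength from $z$.

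It then remains to produce the increasing function $\delta$, as in the last paragraph of Proposition \ref{propi}. I expect the main obstacle to be exactly this final step, since the metric distance $d(z,\cdot)$ need not be monotone along the horocycle, so one cannot simply invert it against arclength. The way around is to set $\delta(\epsilon):=|B_\zeta^z(a(s_*(\epsilon)))|$, where $s_*(\epsilon):=\inf\{s\in[0,s_\gamma]:d(z,a(s))\ge\epsilon\}$, and then check, using continuity of $d(z,a(\cdot))$ together with the strict monotonicity of $|B_\zeta^z\circ a|$ on the arc, that $d(z,x)>\epsilon$ forces the arclength parameter of $x$ to exceed $s_*(\epsilon)$, hence $|B_\zeta(x,z)|>\delta(\epsilon)$, and that $s_*$ (and therefore $\delta$) is strictly increasing in $\epsilon$. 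The specialization $x=\gamma(z)$ gives the final assertion $|B_\zeta(z,\gamma(z))|>\delta(\epsilon)$.
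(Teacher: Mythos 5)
Your proposal is correct and takes essentially the same route as the paper: for the first statement the paper likewise parametrizes $\mathscr{H}_\eta(q)$, shows that any critical point of $B_\zeta^p$ along it must lie on the geodesic $(\eta,\zeta)$, and concludes uniqueness of the critical point because $B_\eta^q$ is strictly monotone along geodesics ending at $\eta$ (exactly your replacement of the Gauss--Bonnet step, which the paper relegates to a footnote). For the second statement the paper only remarks that the argument is ``analogous'' to Proposition \ref{prop1}, so your explicit details---that a parabolic $\gamma$ fixing $\eta$ preserves horocycles at $\eta$, the choice of the monotone side, and the construction of $\delta$ via $s_*(\epsilon)=\inf\{s:d(z,a(s))\ge\epsilon\}$, which correctly avoids assuming $d(z,\cdot)$ is monotone along the horocycle---simply fill in steps the paper leaves implicit (and sketches somewhat loosely even in Proposition \ref{prop1}).
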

 
 \begin{proof}
 As in proposition \ref{prop1}, to show the first statement we are going to see that the derivative of $B_\zeta^p$ along the curve $\mathscr{H}_\eta (q)$ vanishes at most in one point, where $B_\zeta^p(z)=B_\zeta(p,z)$ for all $z\in \tilde{M}$. And then, $\mathscr{H}_\eta (q)$ can meet a level set of $B_\zeta^p$ in at most two points.
 
We first give an arc length parametrization $a(t)$ to the curve  $\mathscr{H}_\eta (q)$, such that $q=a(0)$.  The derivative of $B_\zeta^p(z)$ on the direction of $a(t)$ vanishes on a point $a(t_0)$ if and only if $\langle \nabla B_\zeta^p(a(t_0)),\dot{a}(t_0)\rangle=0$. Then $\nabla B_\zeta^p(a(t_0))$ is normal to the curve $\mathscr{H}_\eta (q)$, and then there is $k\in \mathbb{R}$ such that $\nabla B_\zeta^p(a(t_0))=k\nabla B_\eta^q(a(t_0))$, since the gradient of the Busemann function $B_\eta^q$ is perpendicular to its level set. 

Then, the geodesic directed by the vector $\nabla B_\zeta^p(a(t_0))$ has endpoints $\zeta$ and $\eta$, since the gradient of the Busemann function $B_\eta^q$ is parallel to the geodesic joining $q$ and $\eta$ (see proposition 3.2 of \cite{WB}). If there is an other point $a(t_1)$ such that $\langle \nabla B_\zeta^p(a(t_1)),\dot{a}(t_1)\rangle=0$, then we would also have that the geodesic directed by the vector $\nabla B_\zeta^p(a(t_1))$ is the geodesic joining $\eta$ and $\zeta$. As the geodesic joining two points is unique, this geodesic would be meeting $\mathscr{H}_\eta (q)$ in two points: $a(t_0)$ and $a(t_1)$. But a geodesic ending at $\eta$ only can meet a level set of $B_\eta^q$ once, as $B_\eta^q$ is increasing (or decreasing) along  geodesics having $\eta$ as one of its endpoints\footnote{This is also a consequence of Gauss-Bonnet theorem, as a triangle could not have an angle equal to $\pi$}.  Then, $a(t_0)=a(t_1)$, as we wanted.

The proof of the second statement is analogous to the second statement of proposition \ref{prop1}, using that, up to changing orientation of $\gamma$, the derivative of $B_\zeta^p$ has constant sign along the arc of the curve $\mathscr{H}_\eta (q)$ containing $z$ and $\gamma(z)$. 
 \end{proof}

\section{Proof of Theorem \ref{teorema}}
We are now going to prove Theorem \ref{teorema} in several steps. First, we remind the statement of the theorem:

\emph{
Let M be an orientable geometrically infinite surface with a complete Riemanninan metric of negative curvature. Let $v\in T^1M$ such that $v[0,\infty)$ is an almost minimizing geodesic ray with finite injectivity radius $a$, and such that $h_\mathbb{R}(v)$ is not closed. Then there is a sequence of times $\tau_n$ going to $\infty$ such that $g_{\tau_n}(v)\in \overline{h_\mathbb{R}(v)}$ for all $n$. Moreover, the set  $\mathcal{I}=\{t\in \mathbb{R}:g_t(v)\notin \overline{h_\mathbb{R}(v)}\}$ only contains intervals of bounded length.}

 Let $v\in T^1M$ be as in the hypothesis of \ref{teorema}, this is that $v[0,\infty)$ is an almost minimizing geodesic ray, with finite injectivity radius $a$, and such that $h_\mathbb{R}(v)$ is not a closed horocycle. Let $\tilde{v}$ be a lifted vector of $v$ on $T^1\tilde{M}$. We will call $\xi$ the point $\tilde{v}(\infty)\in \partial_\infty \tilde{M}$.

\begin{Lemma}\label{lema1}
There is a sequence $\{\tilde{v}_n\}_{n\in \mathbb{N}}\subset T^1\tilde{M}$ such that 

\begin{enumerate}
\item $\tilde{v}_n(0)=\gamma_n(\tilde{v}(0))$ for some $\gamma_n\in \Gamma$. 
\item  $\tilde{v}_n(\infty)=\tilde{v}(\infty)=\xi$.
\item $v_n \xrightarrow[n\rightarrow{}\infty]\, v$  en $T^1M$, where $v_n$ are the projected vectors of $\tilde{v}_n$ on $T^1M$. 
\item $B_\xi(\tilde{v}_n(0),\tilde{v}(0))\in [b,B]$ for some $0<b<B<\infty$, and for all $n\in \mathbb{N}$.
\end{enumerate}
\end{Lemma}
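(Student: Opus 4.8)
The plan is to manufacture the $\tilde{v}_n$ out of the near-returns of the ray forced by the finiteness of its injectivity radius, and to read off the four conditions from the fact that the displacement of those returns tends to $a$. Since $\underline{\mathrm{Inj}}(v[0,\infty))=a<\infty$, I first choose times $t_n\to\infty$ with $\mathrm{Inj}(v(t_n))\to a$. Taking $\tilde{v}(t_n)$ (the lift of $v(t_n)$ on the ray) as the lift in Definition \ref{injra}, discreteness of $\Gamma$ gives, for each $n$, a nontrivial $\gamma_n\in\Gamma$ realizing the minimum, so that $d(\tilde{v}(t_n),\gamma_n\tilde{v}(t_n))=\mathrm{Inj}(v(t_n))\to a$. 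Because $v[0,\infty)$ is almost minimizing, Proposition \ref{propam} says $\xi$ is a nonhorocyclic limit point; hence no hyperbolic element fixes $\xi$ (the fixed point of a hyperbolic isometry is a horocyclic limit point by Remark \ref{remhoropoint}), and since $h_\mathbb{R}(v)$ is not closed, $\xi$ is not a parabolic fixed point either (a horocycle based at a parabolic fixed point projects to a closed horocycle). Thus $\gamma_n\xi\neq\xi$ for every $n$. I then let $\tilde{v}_n$ be the unique vector with $\tilde{v}_n(0)=\gamma_n(\tilde{v}(0))$ and $\tilde{v}_n(\infty)=\xi$, which settles conditions (1) and (2); replacing $\gamma_n$ by $\gamma_n^{-1}$ if necessary (this leaves the displacement of $\tilde{v}(t_n)$ unchanged), I choose the orientation so that $B_\xi(\gamma_n(\tilde{v}(0)),\tilde{v}(0))>0$, as allowed by the orientation clauses of Propositions \ref{propi} and \ref{prop2}.

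For condition (3), since $\hat{\pi}$ is $\Gamma$-invariant the vector $\gamma_n^{-1}\tilde{v}_n$ also projects to $v_n$, is based at $\tilde{v}(0)$, and points to $\gamma_n^{-1}(\xi)$; sharing the base point of $\tilde{v}$, it converges to $\tilde{v}$ in $T^1\tilde{M}$ exactly when $\gamma_n^{-1}(\xi)\to\xi$, so condition (3) is equivalent to $\gamma_n^{-1}(\xi)\to\xi$. I would deduce this from the near-return: $\tilde{v}(t_n)$ and $\gamma_n^{-1}\tilde{v}(t_n)$ lie within $a+o(1)$ of one another and both tend to $\xi$, so by convexity of the distance function in nonpositive curvature the rays $[\tilde{v}(0),\xi)$ and $[\gamma_n^{-1}\tilde{v}(0),\gamma_n^{-1}\xi)$ fellow-travel along a segment whose length tends to infinity, forcing their forward endpoints together.

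For condition (4) write $o:=\tilde{v}(0)$. The upper bound is immediate: by the cocycle relation (property (3) of Remark \ref{busemannprops}), the invariance $B_\xi(\gamma_n o,\gamma_n\tilde{v}(t_n))=B_{\gamma_n^{-1}\xi}(o,\tilde{v}(t_n))$, and $B_\xi(\tilde{v}(t_n),o)=-t_n$ (Remark \ref{rem1}),
\[
B_\xi(\gamma_n o,o)=B_{\gamma_n^{-1}\xi}(o,\tilde{v}(t_n))+B_\xi(\gamma_n\tilde{v}(t_n),\tilde{v}(t_n))-t_n\le t_n+(a+o(1))-t_n,
\]
where I used $B_{\gamma_n^{-1}\xi}(o,\tilde{v}(t_n))\le d(o,\tilde{v}(t_n))=t_n$ and $B_\xi(\gamma_n\tilde{v}(t_n),\tilde{v}(t_n))\le d(\gamma_n\tilde{v}(t_n),\tilde{v}(t_n))\to a$ (property (2) of Remark \ref{busemannprops}); hence $B_\xi(\gamma_n o,o)\le B:=a+1$ for large $n$. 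For the lower bound I apply the second statement of Proposition \ref{propi} when $\gamma_n$ is hyperbolic (resp. Proposition \ref{prop2} when $\gamma_n$ is parabolic, which is exactly where $\gamma_n\xi\neq\xi$ is needed) with $\zeta=\xi$ and $p=o$: the orbit point $\gamma_n(o)$ is an isometric image of $o$ and so lies on the equidistant curve $\mathscr{C}_{\gamma_n}(o)$ (resp. on the horocycle through $o$ based at the fixed point of $\gamma_n$), at distance $d(o,\gamma_n o)\ge\mathrm{Inj}(v(0))=:\epsilon_0>0$; the proposition then gives $|B_\xi(\gamma_n o,o)|>\delta(\epsilon_0/2)=:b>0$, and with the chosen orientation $B_\xi(\gamma_n o,o)\in[b,B]$.

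I expect the main obstacle to be making the lower bound $b$ \emph{uniform} in $n$. The increasing function $\delta$ produced by Propositions \ref{propi} and \ref{prop2} depends a priori on the isometry $\gamma_n$ and on the configuration, so $\delta(\epsilon_0/2)$ could in principle degenerate to $0$ as $n\to\infty$; ruling this out requires an extra compactness argument, passing to a subsequence along which the recentered data $(\gamma_n,\tilde{v}(t_n))$ converges (using bounded geometry of $\tilde{M}$ along the ray), so that the corresponding functions $\delta$ stay bounded away from $0$. The fellow-traveling estimate behind $\gamma_n^{-1}(\xi)\to\xi$ is the other technical point, and it is here that the almost-minimizing hypothesis is decisive: it prevents the ray from backtracking, so that the long segments $[o,\tilde{v}(t_n)]$ genuinely transmit the closeness at time $t_n$ to the behaviour of the endpoints at infinity.
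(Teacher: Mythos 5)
Your construction follows the same skeleton as the paper's proof (near-return times $t_n$ with $\mathrm{Inj}(v(t_n))\to a$, isometries $\gamma_n$ with displacement close to $a$ at $\tilde v(t_n)$, and $\tilde v_n$ based at $\gamma_n(\tilde v(0))$ pointing at $\xi$), and two of your deviations are sound and even tidier than the original. Your cocycle computation for the upper bound is correct and replaces the paper's argument, which instead concatenates $v[0,t_n]$, the closed curve $\alpha_n$ and $v[t_n,T]$, passes to the homotopic geodesic $\hat\beta_n^T$, and bounds the Busemann cocycle by $\mathrm{length}(\alpha_n)$; the two estimates are essentially equivalent. Your explicit verification that no $\gamma_n$ fixes $\xi$ (nonhorocyclic rules out hyperbolic fixed points, non-closedness of $h_\mathbb{R}(v)$ rules out parabolic ones) is genuinely needed to apply Propositions \ref{prop1} and \ref{prop2}, and the paper leaves it implicit, so this is a point in your favour. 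Your fellow-traveling argument for condition (3) is, in substance, what the paper also relies on (its ``it follows that $v_n\to v$'' is no more detailed), and your observation that almost-minimizing prevents the ray $\gamma_n^{-1}\tilde v[t_n,\infty)$ from returning near $\tilde v(0)$ is the right mechanism to make it rigorous.

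The genuine gap is exactly the one you flag: the lower bound. Your plan to restore uniformity of $\delta_n(\epsilon_0/2)$ by a compactness argument cannot work in this setting: the surface is geometrically infinite with variable negative curvature (no pinching, no curvature lower bound, no bounded geometry is assumed), and the $\gamma_n$ leave every compact subset of $\Gamma$, so there is no convergent subsequence of the ``recentered data'' to extract. The paper's fix is different and purely elementary: it \emph{calibrates the displacement to the function $\delta_n$ itself}. Since $d(\tilde v(0),\gamma_n^{k}(\tilde v(0)))\to\infty$ as $k\to\infty$ and grows in steps bounded by $d(\tilde v(0),\gamma_n(\tilde v(0)))$, one chooses $k_n$ so that $d(\tilde v(0),\gamma_n^{k_n}(\tilde v(0)))\in[C_n,2C_n]$ with $C_n:=\delta_n^{-1}(a+\epsilon)$, where $\delta_n$ is the function Proposition \ref{prop1} (hyperbolic case) or Proposition \ref{prop2} (parabolic case) attaches to $\gamma_n$; then $B_\xi(\gamma_n^{k_n}(\tilde v(0)),\tilde v(0))\in[\delta_n(C_n),\delta_n(2C_n)]$, and the lower endpoint is $\delta_n(C_n)=a+\epsilon$, uniform in $n$ \emph{by construction}, no matter how badly $\delta_n$ degenerates. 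Replacing $\gamma_n$ by $\gamma_n^{k_n}$ has a knock-on effect you would also need to absorb: your upper bound $a+1$ was derived from the displacement of $\gamma_n$ (not $\gamma_n^{k_n}$) at $\tilde v(t_n)$, so after taking powers the upper bound must instead come from the calibrated displacement (the paper's subsequent Remark argues $B=\delta(2\delta^{-1}(a+\epsilon))$ is uniformly bounded), and condition (3) should be rechecked for the new isometries. So the missing idea is not compactness but this power trick, which is precisely what makes the pair $[b,B]$ uniform.
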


 \begin{proof}[Proof of lemma \ref{lema1}]
 From the definition of injectivity radius (\ref{injrad}), as $\underline{Inj}(v[0,\infty))=a$, we have a sequence $\{t_n\}$ going to $\infty$ such that $Inj (v(t_n))\xrightarrow[n\rightarrow{}\infty]\, a$. Then, given $\epsilon>0$, for big values of $n$ we have that $Inj (v(t_n))<a+\epsilon$. Then, if $\tilde{v}\in T^1\tilde{M}$ is a lifted vector of $v$, as $\tilde{v}(t_n)\in \tilde{M}$ is a lifted point of $v(t_n)$, there must be an isometry $\gamma_n \in \Gamma$ such that $d(\tilde{v}(t_n), \gamma_n(\tilde{v}(t_n))<a+\epsilon$. Then, the geodesic $\tilde{\alpha}_n$ joining  $\tilde{v}(t_n)$ and $\gamma_n(\tilde{v}(t_n))$, projects to a closed curve $\alpha_n$ in $M$.

 Now, we are going to construct the sequence $\{\tilde{v}_n\}$ as follows: consider the curve $\beta^T_n$ obtained by concatenation of $v[0,t_n]$, $\alpha_n$, and $v[t_n,T]$ in that order, where $T$ is any big number. This curve $\beta^T_n$ is not necessarily a geodesic. We call then $\hat{\beta}^T_n$ the geodesic joining $v(0)$ and $v(T)$ which is homotopic to $\beta^T_n$ relative to the endpoints. Now, as $T$ goes to $\infty$, $\hat{\beta}^T_n$ converges to a geodesic ray $v_n[0,\infty)$ starting at $v(0)$ which is asymptotic to $v[0,\infty)$ (see figures below). Then, it follows that $v_n\xrightarrow[n\rightarrow{}\infty]\, v$ in $T^1M$.
 
 \begin{figure}[ht!]  
\label{horo2}
\begin{center}   
\includegraphics[width=170mm]{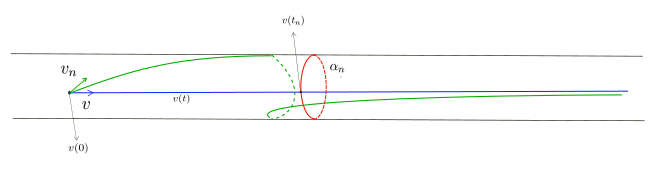}
 \end{center} 
 \caption{Geodesics $\alpha_n$, $v[0,\infty)$ and $v_n[0,\infty)$ on the surface $M$.}
 \end{figure}

\begin{figure}[ht!]  
 \label{horo1}
\begin{center}   
\includegraphics[width=100mm]{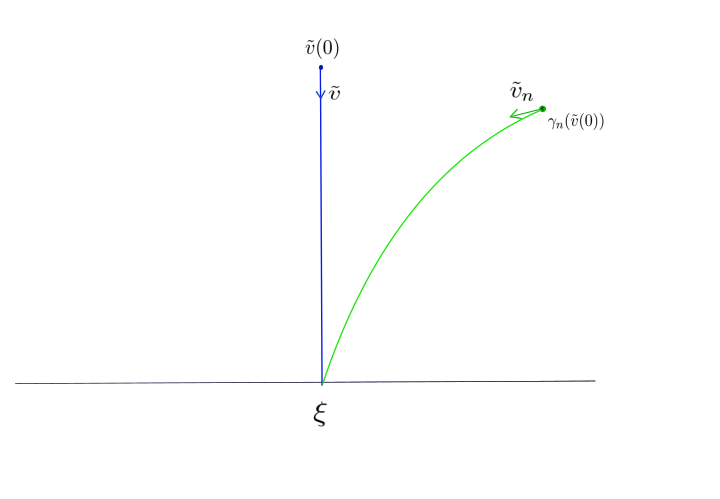}
 \end{center} 
 \caption{Geodesics $\tilde{v}[0,\infty)$ and $\tilde{v}_n[0,\infty)$ on the surface $\tilde{M}$.}
 \end{figure}
 
  \begin{figure}[ht!]  
\label{horo2}
\begin{center}   
\includegraphics[width=120mm]{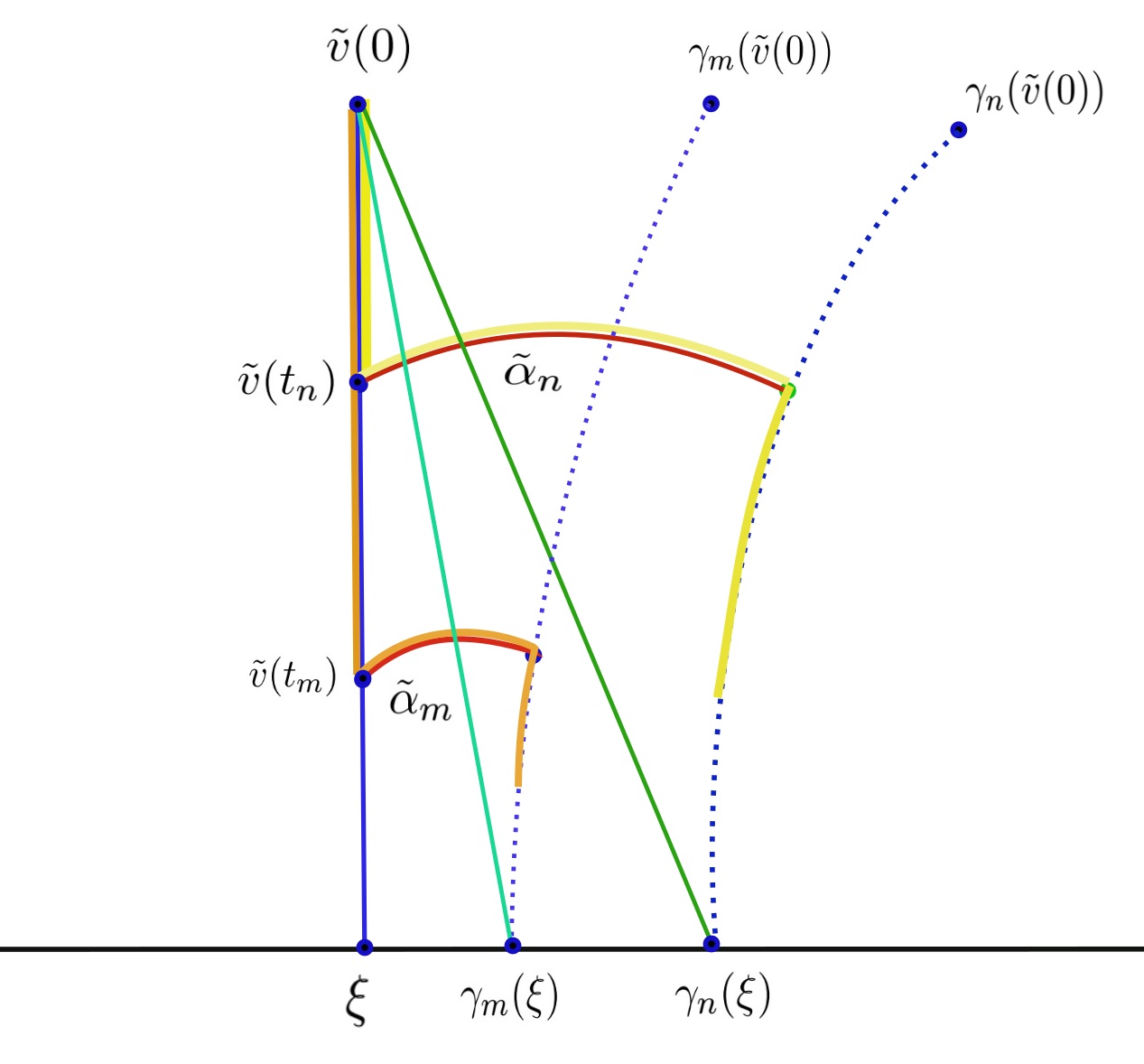}
 \end{center} 
 \caption{Here we see $\beta^T_n$ (in yellow) and $\beta^T_m$ (in orange), with $m>n$, both ending at time $T$. The dark green geodesic ray would be the lifted of $v_n[0,\infty))$, and the one in light green would be the lifted of $v_m[0,\infty)$.} 
 \end{figure}

  Now consider the lifted $\tilde{v}$ of $v$, which has basepoint $\tilde{v}(0)$, and let $\tilde{v}(\infty)=\xi$. Then there is a lifted $\tilde{v}_n$ of $v_n$ which also has endpoint $\xi$, because $v$ and $v_n$ are asymptotic. But $v_n[0,\infty)$ is the limit of $\hat{\beta}_n^T$, and it is homotopic to $\beta_n^T$. By construction of $\beta_n^T$, a lifted $\tilde{\beta}_n^T$ of this curve that starts at $\tilde{v}(0)$ must end at $\gamma_n(\tilde{v}(T))$. Then, we have that $\tilde{v}_n(0)$ must be $\gamma_n(\tilde{v}(0))$ (see figure 4). And then $\tilde{v}_n$ satisfies statements 1 and 2 of the lemma.

Let us see now that $B_\xi(\gamma_n(\tilde{v}(0)), \tilde{v}(0))\in [b,B]$.  Up to taking some positive power of $\gamma_n$, we can assume that the length of $\alpha_n$ is between $a+\epsilon$ and $2(a+\epsilon)$. Because of the construction of $v_n$, the length of $\alpha_n$ is the difference of lengths between $v[0,T]$ and $\beta_n^T$. As $\hat{\beta}_n^T$ is a geodesic with the same initial and endpoints as $\beta_n^T$ and in the same homotopy class, it is shorter than $\beta_n^T$. Then, the difference of length between $\hat{\beta}_n^T$ and $v[0,T]$ is bounded from above by the length of $\alpha_n$. As $T$ goes to $\infty$ this bound still holds, then the length of $\alpha_n$ is also an upper bound for $B_\xi(\gamma_n(\tilde{v}(0)), \tilde{v}(0))$.

Now, we are going to see that there is a lower bound for $B_\xi(\gamma_n(\tilde{v}(0)), \tilde{v}(0))$. There are then two cases:

Case 1: $\gamma_n$ is an hyperbolic isometry. Consider the arc joining $\tilde{v}(0)$ and $\gamma_n(\tilde{v}(0))$, contained on the curve $\mathscr{C}_{\gamma}(\tilde{v}(0))$, the curve of points at a constant distance from the axis of $\gamma_n$. Consider the function $\delta(\epsilon)$ given by Proposition \ref{prop1}, associated with the Busemann function $B_\xi^{\tilde{v}(0)}$, where $B_\xi^{\tilde{v}(0)}(z)=B_\xi(\tilde{v}(0),z)$ for all $z\in \tilde{M}$.

Now we have two possibilities: if $d(\gamma_n(\tilde{v}(0)), \tilde{v}(0))>\delta^{-1}(a+\epsilon)$, because of statement 2 of Proposition \ref{prop1}, we have that $B_\xi(\gamma_n(\tilde{v}(0)), \tilde{v}(0))>a+\epsilon$ for all $n$. And also $B_\xi(\gamma_n(\tilde{v}(0)), \tilde{v}(0))<length(\alpha_n)<2(a+\epsilon)$.

If $d(\gamma_n(\tilde{v}(0)), \tilde{v}(0))<\delta^{-1}(a+\epsilon)=C$, we can take a positive integer $k_n$ such that $d(\gamma_n^{k_n}(\tilde{v}(0)), \tilde{v}(0)) \leq k_n d(\tilde{v}(0), \gamma_n(\tilde{v}(0)))$. And choosing the right $k_n$ we can make that $d(\gamma_n^{k_n}(\tilde{v}(0)), \tilde{v}(0))\in [C, 2C]$, as we are taking an integral multiple of a number which is smaller than $C$, and also for every $n$, we have that $d(\tilde{v}(0),\gamma_n^d(\tilde{v}(0)))\xrightarrow[d\rightarrow{}\infty]\, \infty$. Then $B_\xi(\gamma_n^{k_n}(\tilde{v}(0)), \tilde{v}(0))\in [\delta(C),\delta(2C)].$ 
Finally substituting $\gamma_n$ by $\gamma_n^{k_n}$, we have what we wanted.

Case 2: $\gamma_n$ is a parabolic isometry. Suppose $\gamma_n$ has fixed point $\eta$. Consider the arc joining $\tilde{v}(0)$ and $\gamma_n(\tilde{v}(0))$. Both $\tilde{v}(0)$ and $\gamma_n(\tilde{v}(0))$ are in the same projected horocycle based at $\eta$, meaning that $B_\eta (\gamma_n(\tilde{v}(0)), \tilde{v}(0))=0$. Now, with an analogous reasoning to the hyperbolic case,we can choose $k_n$ such that $d(\gamma_n^{k_n}(\tilde{v}(0)), \tilde{v}(0))\in [C,2C]$, where $C=\delta^{-1}(a+\epsilon)$ and $\delta$ is as in the second statement of Proposition \ref{prop2}. Substituting $\gamma_n$ by $\gamma_n^{k_n}$ we get $$B_\xi(\gamma_n(\tilde{v}(0)), \tilde{v}(0))\in [b,B],$$ for all $n\in \mathbb{N}$, as we wanted to see.

 \end{proof}
 
\begin{Lemma}\label{lema2}
For a sequence $\{\tilde{v}_n\}_{n\in \mathbb{N}}\subset T^1\tilde{M}$ satisfying points 1 to 4 of Lemma \ref{lema1}, there is a sequence $\{r_n\}_{n\in \mathbb{N}}\subset \mathbb{R}$, with $r_n \xrightarrow[n\rightarrow{}\infty]\, r_0\in \mathbb{R}$ such that $d(g_{t+r_n}(v_n),g_t(v))\xrightarrow[t\rightarrow{}\infty]\, 0$.
\end{Lemma}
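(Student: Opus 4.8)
The plan is to exploit the fact that $\tilde{v}_n$ and $\tilde{v}$ point at the same boundary point $\xi=\tilde{v}(\infty)=\tilde{v}_n(\infty)$, so their forward geodesics are asymptotic and differ only by a geodesic-time shift, which the Busemann function measures exactly. Accordingly, I would simply \emph{define} the shift by $r_n:=B_\xi(\tilde{v}_n(0),\tilde{v}(0))$ and then show that $g_{r_n}(\tilde{v}_n)$ lands on the strong stable horocycle of $\tilde{v}$ in $T^1\tilde{M}$. Working upstairs is legitimate because $\hat{\pi}$ is a local isometry commuting with the geodesic flow, so once the convergence is established in $\tilde{M}$ it descends to $M$ (distances do not increase under $\hat{\pi}$).

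To carry this out, I would first record two elementary facts about $B_\xi$. By Remark \ref{rem1}, moving along $\tilde{v}_n$ towards $\xi$ changes the Busemann function at unit rate, so $B_\xi(\tilde{v}_n(0),\tilde{v}_n(r_n))=r_n$. Next, taking limits in property 3 of Remark \ref{busemannprops} gives the cocycle relation $B_\xi(x,z)=B_\xi(x,y)+B_\xi(y,z)$, and specializing $z=x$ yields the antisymmetry $B_\xi(x,y)=-B_\xi(y,x)$. Combining these, I compute $B_\xi(\tilde{v}(0),\tilde{v}_n(r_n))=B_\xi(\tilde{v}(0),\tilde{v}_n(0))+r_n=-B_\xi(\tilde{v}_n(0),\tilde{v}(0))+r_n=0$. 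Hence the basepoint $\tilde{v}_n(r_n)$ of $g_{r_n}(\tilde{v}_n)$ lies on the horocycle $H_{\tilde{v}(0)}(\xi,0)=\{z:B_\xi(\tilde{v}(0),z)=0\}$ through $\tilde{v}(0)$, and since $g_{r_n}(\tilde{v}_n)(\infty)=\xi$, the vector $g_{r_n}(\tilde{v}_n)$ belongs to $\hat{H}_{\tilde{v}(0)}(\xi,0)$, which by the construction in Section \ref{prel} is precisely the strong stable set of $\tilde{v}$ for the lifted geodesic flow.

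From here the convergence is immediate: by definition of the strong stable set, $d\big(g_{t+r_n}(\tilde{v}_n),g_t(\tilde{v})\big)=d\big(g_t(g_{r_n}(\tilde{v}_n)),g_t(\tilde{v})\big)\to 0$ as $t\to\infty$ in $T^1\tilde{M}$ (the upper curvature bound $-\kappa^2$ even yields exponential contraction). Projecting by $\hat{\pi}$ and using $g_{t+r_n}(v_n)=\hat{\pi}(g_{t+r_n}(\tilde{v}_n))$, $g_t(v)=\hat{\pi}(g_t(\tilde{v}))$, I obtain $d(g_{t+r_n}(v_n),g_t(v))\to 0$ in $T^1M$. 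Finally, property 4 of Lemma \ref{lema1} gives $r_n=B_\xi(\tilde{v}_n(0),\tilde{v}(0))\in[b,B]$, so $\{r_n\}$ is bounded; passing to a subsequence (along which properties 1--4 are preserved, property 3 being the only limiting condition) I get $r_n\to r_0\in[b,B]\subset\mathbb{R}$. This is exactly the data $u_n\to u$, $r_n\to r_0$, $d(g_{t+r_n}(u_n),g_t(u))\to 0$ needed to invoke Proposition \ref{Prop} in the next step.

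The proof is mostly bookkeeping, so I do not expect a serious obstacle; the two delicate points are keeping the sign conventions of the Busemann function consistent—so that the chosen $r_n$ genuinely cancels the Busemann offset and places $g_{r_n}(\tilde{v}_n)$ on the level-$0$ horocycle—and invoking the identification of that horocycle with the strong stable set established in Section \ref{prel}. Once these are in place, the contraction along strong stable leaves and the bound $r_n\in[b,B]$ supplied by Lemma \ref{lema1} do the rest.
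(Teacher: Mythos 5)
Your proposal is correct and follows essentially the same route as the paper: the paper's proof also defines $r_n:=B_\xi(\tilde{v}_n(0),\tilde{v}(0))$, uses point 4 of Lemma \ref{lema1} to extract a convergent subsequence $r_n\to r_0\in[b,B]$, and deduces $d(g_{t+r_n}(v_n),g_t(v))\xrightarrow[t\rightarrow{}\infty]\,0$ from the definition of the Busemann function. The only difference is that you spell out the cocycle computation placing $g_{r_n}(\tilde{v}_n)$ on the level-$0$ horocycle (the strong stable set of $\tilde{v}$), a step the paper leaves implicit.
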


 \begin{proof}[Proof of lemma \ref{lema2}] 
 
As $b<B_\xi(\tilde{v}_n(0),\tilde{v}(0))<B$, if we define $r_n:=B_\xi(\tilde{v}_n(0),\tilde{v}(0))$, as it is a bounded sequence, replacing if it is necessary $r_n$ by a subsequence, we can assume $r_n\longrightarrow r_0\in [b,B]$. By definition of the Busemann function, we also have: $d(g_t(v),g_{t+r_n}(v_n)) \xrightarrow[t\rightarrow{}\infty]\, 0$. 
 
  \end{proof}
  
  \begin{proof}[Proof of theorem \ref{teorema}]
  It follows as a direct corollary of Lemma \ref{lema1}, Lemma \ref{lema2} and proposition \ref{Prop}, that there is $r_0$, with $0<r_0<\infty$ such that  $g_{r_0}(v)\in \overline{h_\mathbb{R}(v)}$. Now $g_{r_0}(g_{r_0}(v))\in g_{r_0}(\overline{h_\mathbb{R}(v)})=\overline{h_\mathbb{R}(g_{r_0}(v))}\subset \overline{h_\mathbb{R}(v)}$, because the closure of an orbit is invariant by the horocycle flow. Applying the same argument, defining $\tau_n:=nr_0$, we can conclude that $g_{\tau_n}(v)\in \overline{h_\mathbb{R}(v)}$ for all $n$. As $r_0\in [b,B]$, in every set of length $B$ there is at least one of these $\tau_n$. This completes the proof. 
  \end{proof}
  
  \begin{Remark}
  From the proof of Lemma \ref{lema1} we have that there is $t_0\in [b,B]$ such that $g_{t_0}(v)\in \overline{h_\mathbb{R}(v)}$, were $B=\delta(2\delta^{-1}(a+\epsilon))$. As $\delta$ measures the variation of the Busemann function along a curve which is not a geodesic (along which the biggest variation occurs), we have that $B<2(a+\epsilon)$. Then, even when $\delta$ depends on $\gamma$, it is uniformly bounded from above. Also, as we can take $\epsilon$ as small as we want, in every interval of length $2a$ there is at least one $t$ such that $g_{t}(v)\in \overline{h_\mathbb{R}(v)}$.
  \end{Remark}

  \begin{Corollary}
  In case $a=0$, one has $g_{\mathbb{R}^+}(v)\subset \overline{h_\mathbb{R}(v)}$.
  \end{Corollary}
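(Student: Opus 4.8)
The plan is to squeeze density out of the Remark immediately preceding the statement. That Remark records that for every $\epsilon>0$ the construction behind Theorem \ref{teorema} produces a family of times $\tau_n=nr_0$ (with $r_0\in[b,B]$) at which $g_{\tau_n}(v)\in\overline{h_\mathbb{R}(v)}$, and that the gap $B=\delta(2\delta^{-1}(a+\epsilon))$ satisfies $B<2(a+\epsilon)$. Setting $a=0$ collapses this bound to $B<2\epsilon$, and since $\epsilon$ is arbitrary the hitting times of the geodesic orbit against $\overline{h_\mathbb{R}(v)}$ can be made arbitrarily closely spaced. The whole corollary is then a soft topological consequence of ``closed $+$ dense $=$ everything''.

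Concretely, first I would introduce the set $S:=\{t\geq 0:\ g_t(v)\in\overline{h_\mathbb{R}(v)}\}$. Note $0\in S$ because $v\in h_\mathbb{R}(v)\subset\overline{h_\mathbb{R}(v)}$, and by Theorem \ref{teorema} the equally spaced points $\tau_n=nr_0$ all lie in $S$ with consecutive gaps $r_0\leq B$; hence, for each fixed $\epsilon>0$, the set $S$ meets every subinterval of $\mathbb{R}^+$ of length $B<2\epsilon$. Re-running the construction for every $\epsilon>0$ (each run lands in the \emph{same} fixed closed set $\overline{h_\mathbb{R}(v)}$) shows that $S$ intersects subintervals of $\mathbb{R}^+$ of arbitrarily small length, so $S$ is dense in $\mathbb{R}^+$. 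Next I would observe that $S$ is closed: the map $t\mapsto g_t(v)$ is continuous from $\mathbb{R}^+$ into $T^1M$ and $\overline{h_\mathbb{R}(v)}$ is closed, so $S$ is the preimage of a closed set. A subset of $\mathbb{R}^+$ that is simultaneously closed and dense equals $\mathbb{R}^+$, whence $g_t(v)\in\overline{h_\mathbb{R}(v)}$ for every $t\geq 0$, i.e. $g_{\mathbb{R}^+}(v)\subset\overline{h_\mathbb{R}(v)}$.

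The delicate point, and the step I expect to need the most care, is the passage from ``finitely many equally spaced hits for each fixed $\epsilon$'' to genuine density: one must make sure the bound $B<2(a+\epsilon)$ is available uniformly across all $\epsilon>0$ and that $r_0\in[b,B]$ (with $b>0$) does not create an uncovered initial gap. The former is fine because the entire argument of Lemma \ref{lema1}, Lemma \ref{lema2} and Proposition \ref{Prop} can be applied afresh for each $\epsilon$, always producing times inside the one fixed target $\overline{h_\mathbb{R}(v)}$; the latter is handled by the fact that $0\in S$ already and each $\tau_{n+1}$ sits within distance $r_0\leq B$ of $\tau_n$, so no gap of length exceeding $B$ ever appears. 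Everything else is routine once density and closedness of $S$ are in hand.
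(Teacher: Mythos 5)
Your proposal is correct and takes essentially the same approach as the paper: its one-line proof observes that $\delta(0)=0$ in Propositions \ref{prop1} and \ref{prop2} and applies Theorem \ref{teorema}, which via the preceding Remark's bound $B=\delta(2\delta^{-1}(a+\epsilon))<2(a+\epsilon)=2\epsilon$ is exactly the arbitrarily-small-gap input you use. Your explicit ``$S$ is closed and dense in $\mathbb{R}^+$, hence equals $\mathbb{R}^+$'' step merely spells out what the paper leaves implicit.
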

  
  \begin{proof}
  It suffices to observe that the functions $\delta$ in both propositions \ref{prop1} and \ref{prop2} satisfy $\delta(0)=0$, and apply Theorem \ref{teorema}.
  \end{proof}

\section{Applications to tight surfaces}

\begin{Remark}\label{rem3}
The fundamental groups of tight surfaces are infinitely generated and of the first kind. The last means that, if $\tilde{M}$ is the universal cover of $M$, then $L(\Gamma)=\partial_\infty \tilde{M}$.
\end{Remark}

\begin{Remark}\label{rem4}
In view of proposition \ref{propdense} and remark \ref{rem3}, for a tight surface $M$, if $\tilde{v}\in T^1\tilde{M}$ is such that $\tilde{v}(\infty)$ is an horocyclic limit point, then $\overline{\hat{\pi}(h_\mathbb{R}(\tilde{v}))}=T^1M$. 
\end{Remark}

\begin{Proposition}\label{injtight}
Every almost minimazing geodesic ray $v[0, \infty)$ on a tight surfaces has finite injectivity radius. 
 \end{Proposition}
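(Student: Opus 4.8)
The plan is to read off the conclusion from the defining structure of a tight surface: $M$ is exhausted by compact pieces $M_n$ whose boundary components are closed geodesics of length at most $A$, and an almost minimizing ray runs off to infinity. The ray must therefore cross these uniformly short boundary geodesics at arbitrarily large times, and each crossing point has injectivity radius at most $A$. This forces $\underline{\mathrm{Inj}}(v[0,\infty))\leq A<\infty$, so the almost minimizing hypothesis is used only to guarantee escape to infinity.

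First I would record that an almost minimizing ray leaves every compact set. From $d(v(t),v(0))\geq t-c$ we get $d(v(t),v(0))\to\infty$ as $t\to\infty$; since $M$ is complete, its closed metric balls are compact by Hopf--Rinow, and each $M_n$ sits inside such a ball, so $v(t)$ eventually exits $M_n$. To obtain crossings arbitrarily far out, fix $T>0$: the segment $v[0,T]$ is compact, hence contained in some $M_n$ by the exhaustion $M=\bigcup_n M_n$. Setting $t^{\ast}:=\sup\{t\geq 0 : v(t)\in M_n\}$, escape to infinity gives $t^{\ast}<\infty$, while $v(T)\in M_n$ forces $t^{\ast}\geq T$; by continuity $v(t^{\ast})$ is a limit of points in $M_n$ and of points outside $M_n$, hence lies on $\partial M_n$, that is, on a boundary component $\sigma$ which is a closed geodesic of length at most $A$.

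Next I would bound the injectivity radius at such a crossing point $p:=v(t^{\ast})\in\sigma$. The closed geodesic $\sigma$ is the projection of the axis of a primitive hyperbolic element $\gamma\in\Gamma$ whose translation length equals the length of $\sigma$. Choosing the lift $\tilde p$ of $p$ lying on that axis gives $d(\tilde p,\gamma(\tilde p))=\mathrm{length}(\sigma)\leq A$, so by Definition \ref{injra} one has $\mathrm{Inj}(p)\leq A$. Letting $T\to\infty$ produces a sequence of times $t_k\to\infty$ with $\mathrm{Inj}(v(t_k))\leq A$, and therefore $\underline{\mathrm{Inj}}(v[0,\infty))=\liminf_{t\to\infty}\mathrm{Inj}(v(t))\leq A<\infty$.

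I expect the only delicate point to be the middle step, namely guaranteeing that the boundary crossings recur at arbitrarily large times rather than occurring just once; this is exactly why I let $T$ range over all positive reals and, for each $T$, locate a last-exit time $t^{\ast}\geq T$ from a compact piece $M_n\supset v[0,T]$. The verification that a compact segment lands inside some $M_n$ is where the increasing exhaustion is used, and the identification of the crossing point's injectivity radius with the translation length of the corresponding boundary isometry is then a routine fact of hyperbolic geometry.
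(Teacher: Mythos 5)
Your proof is correct and follows essentially the same route as the paper: the almost minimizing hypothesis forces the ray to escape to infinity and hence to cross the boundary geodesics of the exhaustion at arbitrarily large times, and at each crossing the hyperbolic isometry whose axis lifts the boundary geodesic gives $d(\tilde p,\gamma(\tilde p))\leq A$, so $\underline{\mathrm{Inj}}(v[0,\infty))\leq A$. Your last-exit-time argument simply makes explicit the step the paper asserts without detail (that the ray ``intersects an infinite number of these geodesics''), which is a welcome bit of extra care but not a different method.
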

 
 \begin{proof}
 Consider the geodesics $\delta_n$ of definition \ref{tight}. As $v[0, \infty)$ is almost minimizing, it intersects an infinite number of these geodesics. Replacing $\delta_n$ by a subsequence, we can assume that $v[0, \infty)$ intersects $\delta_n$ for all $n$. Let $t_n$ be the times such that $v(t_n)\in \delta_n$. Let $\tilde{\delta_n}$ be a lift of $\delta_n$ on $\tilde{M}$, and $\tilde{v}(t_n)$ a lift of $v(t_n)$. Consider $\eta_n \in \Gamma$ the hyperbolic isometry fixing $\tilde{\delta}_n$. By hypothesis, we have that the lengths of $\delta_n$ are bounded by a constant $A$, so $d(\tilde{v}(t_n), \eta_n(\tilde{v}(t_n)))\leq A$. This means that $Inj(v(t_n))\leq A$ for all $n$, and then $\liminf_{n\longrightarrow \infty} Inj(v(t_n))\leq A$. It follows that $\liminf_{t\longrightarrow \infty} Inj(v(t))<A$. Then we have $$\underline{Inj}(v[0,\infty))\leq A<\infty,$$ as we wanted.
 \end{proof}

\begin{Definition}\em
Given a metric space $Y$ and a flow $\{\varphi_t\}_{t\in \mathbb{R}}$, we say that $X\subset Y$ is a \textit{minimal set} for the flow, if it is closed, invariant by $\varphi_t$ and minimal with respect to the inclusion.
\end{Definition}

\begin{proof}[Proof of Corollary \ref{theorema2}]
Supose $X\subset T^1M$ is a minimal set for the horocycle flow. Consider a vector $v\in X$ and $\tilde{v}\in T^1\tilde{M}$ a lifted of $v$. As $X$ is a minimal set, $h_\mathbb{R}(v)$ must be dense in $X$, otherwise its closure would be a proper invariant subset of $X$, and $X$ would not be minimal. In the other hand, $X$ can not be $T^1M$, because in that case every orbit should be dense, but that can't happen since the limit set has both horocyclic and nonhorocyclic limit points, and horocycles based on nonhorocyclic limit points are not dense. So $X$ is a proper subset of $T^1M$. This implies that $\tilde{v}(\infty)$ must be a nonhorocyclic limit point, since the closure of its projected orbit is $X\neq T^1M$. Then, $v[0,\infty)$ is an almost minimizing geodesic ray, and by theorem \ref{teorema} and proposition \ref{injtight}, we know that there is a $t_0$ such that $g_{t_0}(v)\in \overline{h_\mathbb{R}(v)}$. Then, $g_{t_0}(X)\cap X\neq \emptyset$ and then $g_{t_0}(X)=X$. Then for all $n\in \mathbb{N}$ we have $g_{nt_0}(X)=X$.

Let us see that it actually implies that $\tilde{v}(\infty)$ is horocyclic: consider an horocycle $B$ based at  $\tilde{v}(\infty)$. As we know, we can write $$B=\{z\in \tilde{M}:\ B_{\tilde{v}(\infty)}(\tilde{v}(0),z)>k\},$$ for some $k\in \mathbb{R}$. As $v(nt_0)\in X=\overline{h_\mathbb{R}(v)}$, and because of proposition \ref{Prop}, there is a sequence $\{\gamma_m\}_{m\in \mathbb{N}}\subset \Gamma$ such that $B_{\tilde{v}(\infty)}(\tilde{v}(0),\gamma_m^{-1}\tilde{v}(0))\xrightarrow[m\rightarrow{}\infty]\, nt_0$. Then, choosing a big $n$ we have $nt_0>k$, and then for a big $m$, $\gamma_m^{-1}\tilde{v}(0) \in B$. So we can find an element of the $\Gamma$-orbit of $\tilde{v}(0)$ on any horocycle based at $\tilde{v}(\infty)$, and this means that $\tilde{v}(\infty)$ is an horocyclic limit point, which is absurd as we already showed that it must be nonhorocyclic. 
\end{proof}

 \end{document}